\documentclass[11pt]{amsart}

\usepackage[english]{babel}

\usepackage{color}
\usepackage{array}
\usepackage{enumerate}
\usepackage{enumitem}
\usepackage{refcount}
\usepackage{algorithm}
\usepackage{algorithmic}
\usepackage{boxedminipage}
\usepackage{float}
\usepackage[utf8x]{inputenc}
\usepackage{soul}

\usepackage{parskip}

 \usepackage[colorlinks]{hyperref}
\hypersetup{
    colorlinks,
    linkcolor={red!50!black},
    citecolor={green!50!black},
    urlcolor={blue!80!black}
}
\usepackage[nameinlink,capitalize]{cleveref}

\usepackage{color}
\usepackage{graphicx}
\usepackage{multirow}
\usepackage{tikz}
\usepackage{amsmath}
\usepackage{amssymb}
\usepackage{caption}
\usepackage{tikz-cd}

\newtheorem{thm}{Theorem}[section]
\newtheorem{prop}[thm]{Proposition}
\newtheorem{lem}[thm]{Lemma}

\theoremstyle{definition}
\newtheorem{defn}[thm]{Definition}
\newtheorem{nota}[thm]{Notation}
\newtheorem{exa}[thm]{Example}

\newtheorem{remark}[thm]{Remark}

\def\proof{{\bf Proof. }}

\newcommand{\bN}{\mathcal N}

\newcommand{\bL}{\mathcal L}
\newcommand{\bS}{\mathcal S}
\newcommand{\bM}{\mathcal M}

\newcommand{\prfend}{\hbox to7pt{\hfil}

\par\vskip-\baselineskip\hbox to\hsize
{\hfil\vbox {\hrule width6pt height6pt}}\vskip\baselineskip}

\def\qed{\rightline{$\rlap{$\sqcap$}\sqcup$}}

\def\a{\bigskip \par \noindent}
\def\b{\par \noindent}

\def\dd{\medskip \par \noindent}
\long\def\eatit#1{}

\def\N{\mathbb{N}}
\def\C{\mathbb{C}}

\def\P{\mathbb{P}}

\def\l{\lambda}
\def\m{\mu}
\def\e{\varepsilon}

\def\g{\gamma}

\font\tengothic=eufm10
\font\sevengothic=eufm7
\newfam\gothicfam
       \textfont\gothicfam=\tengothic
       \scriptfont\gothicfam=\sevengothic

\usepackage{xypic}

\begin{document}
\title{Some remarks on degeneracy of tridimensional tensors}

\author[A. Gimigliano]{Alessandro Gimigliano}
\author[M. Id\`a]{Monica Id\`a}
\address[A. Gimigliano, M. Id\`a]{Dipartimento di Matematica, Universit\`a di Bologna, Piazza di Porta S.Donato 5, 40126, Bologna, Italy}
\email{alessandr.gimigliano@unibo.it, monica.ida@unibo.it}

\maketitle

%%%%%%%%%%%%%%%%%%%%

\begin{abstract}

We study tridimensional tensors on $\C$ from the point of view of hypermatrices, taking into consideration the problem of determining whether they are degenerate or not, concise or not, and what is their essential format if they are non-coincise; moreover, in some cases, their tensor rank. We use a geometrical approach to these problems which,  in part, goes back to Schl\"{a}fli and consists in studying certain determinantal schemes associated to the hypermatrix.
\end{abstract}

\tableofcontents

\section{Introduction}

The study of hypermatrices goes back to Cayley, whose definition of hyperdeterminant had been mostly forgotten for a long time, until it has been deeply studied in the fundamental work \cite{GKZ}, now a classical reference on the subject (see also \cite{Gh}). Particularly, the tridimensional case has received a lot of attention (e.g. see \cite{K1}, \cite{K2}, \cite{F}, \cite{Ol}, \cite{TC}, \cite{T}, \cite{E}).

Here we consider tridimensional hypermatrices on $\C$ under various points of view (as associated to tensors, to multihomogeneous polynomials in three sets of variables, to trilinear applications), and we want to use a geometrical approach to their study which consists in considering three determinantal schemes defined by the matrices $L$, $M$ and $N$ of linear forms in one of the three sets of variables 	$x_i$, $y_j$ and $z_k$, associated to $A = (A_{ijk})$, for $i=1,\ldots, p$, $j=1,\ldots, q$, $k=1,\ldots, r$. This approach goes back to Sch\"{a}fli \cite{Sc} for particular formats, namely, in the tridimensional case, he considered hypermatrices where two among $p,q$ and $r$ are equal, so one of the matrices of linear forms is square, and he proved that the hyperdeterminant of $A$ divides the discriminant of the determinant of such square matrix.

We will consider, above all, the problem of how to determine when such $A$ is $degenerate$, i.e. when, denoting with $f_A$ the associated linear map, it exists a non-zero element $(a,b,c) \in \C^p\times \C^q\times \C^r$ such that
$$
f_A(\C^p, b,c) = f_A(a, \C^q, c) = f_A(a, b, \C^r) = 0 .
$$
Notice that for the formats when the hyperdeterminant $Det (A)$ is defined, $A$ is degenerate if and ony if $Det(A)= 0$ (see \cite{GKZ}).

We also consider other properties of $A$, as the tensor rank of the associated tensor and its  conciseness (i.e. whether the associated multi-homogeneous polynomial can be expressed using less variables). The notion of $concise$ $tensor$ and of $essential$ $format$ have been much studied for symmetric polynomials (see e.g. \cite{Ca}); in the general case these notions, which corresponds to the minimal numbers of variables that can be used to express the tensor, are what was classically called "index ranks" (e.g. see \cite{TC}).

It is worth noting that the study of a projective determinantal scheme is often 
computationally simpler than considering a tridimensional matrix (e.g. see \cite{A1}, \cite{A2}).

Our main reference for our study are \cite{GKZ}, the very nice expositive paper \cite{O},  and \cite{A1}, \cite{A2}, where the connection between degeneracy of $A$ and singularities or unexpected dimension of the associated schemes is studied. In section 3 we recall and reformat a result of \cite{A2} and its proof for the reader's convenience, and we explain why the main result in that paper is not true, so that our proposition \ref{A2Prop15} is a new result on this matter.

The structure of the paper is as follows: in section 2 we give preliminary definitions and properties that we need for this study; these are mostly well-known facts, but we list them here to set up notations and we prove some of them for the reader's convenience  (and particularly for who isn't acquainted with the matter); in section 3 we introduce Schl\"{a}fli's approach and its generalization, much on the line of what is done in \cite{T} and \cite{TC} to study the (3,3,3) and (3,3,4) formats in order to determine classes of tensors modulo linear changes of variables and permutations. We use this approach for our main results (Proposition \ref{A2Prop15} and Theorem \ref{Theorem}), which, as we said, give  a new corrected version of results in \cite{A2}. In section 4 we study hypermatrices of format $(2,2,r)$, $r\geq 2$ and, by using the associated determinantal schemes, we are able to give algorithms to determine when they are degenerate and when concise and to find their tensor rank and essential format.
\medskip

\section{Preliminaries}
In the following we refer to tridimensional hypermatrices (or tensors) on $\C$ and
give all definitions and properties for this case, even when they could be defined also for higher dimensional tensors (see also  \cite{O}). 
\begin{nota} \rm Let  $p, q, r$ be non-zero integers. We set $R:=\C[x_1,\ldots, x_{p},y_1,\ldots, y_{q},z_1,\ldots, z_{r}]$, and we denote by $R_{n,m,s}$ the set of $(n,m,s)$- homogeneous polynomials in $R$, i.e. the polynomials which are homogeneous of degree $n$ in $x_1,\ldots, x_{p}$, of degree $m$ in $y_1,\ldots, y_{q}$, and of degree $s$ in $z_1,\ldots, z_{r}$. 
\end{nota}

\begin{defn} \rm Let  $p, q, r$ be non-zero integers.
We call {\it $3$-dimensional hypermatrix  of format $(p, q, r)$}, or  {\it $(p, q, r)$-hypermatrix}  for short, a finite family indexed by three indeces \linebreak
$A = (a_{ijk})_{i=1,\dots,p; j=1,\dots,q; k=1,\dots,r }$.

If we fix basis $(\e_1,\dots,\e_p), (\eta_1,\dots,\eta_q),(\zeta_1,\dots,\zeta_r)$ respectively on the three vector spaces $\C^p , \C^q , \C^r$, and we denote the coordinates respectively by $(x_1,...,x_p), (y_1,...,y_{q}), (z_1,...,z_{r})$, we can associate biunivocally to $A$:
\begin{itemize} 
\item a tensor  in $\C^p \otimes \C^q \otimes \C^r$, namely the tensor $T_A=\sum_{i,j,k}a_{ijk}\, \e_i \otimes \eta_j \otimes \zeta_k$;
\item a trilinear function 
$$f_A : \C^p \times \C^q \times \C^r \to \C$$ 
$$(x_1,...,x_p,y_1,...,y_q,z_1,...,z_r)\mapsto \sum_{i,j,k}a_{ijk}x_iy_jz_k$$ 
(here $a_{ijk}= f_A(\e_i,\eta_j,\zeta_k)$);
\item the trihomogeneous polynomial $P_A := \sum_{i,j,k} a_{ijk}x_iy_jz_k \in R_{1,1,1}$.
\end{itemize} 

\par \medskip  We will consider $A$, $T_A$, $f_A$ and $P_A$ as defined modulo multiplication for a non-zero constant. The space $\P^m$, $m=pqr-1$, parameterizes $(p, q, r)$-hypermatrices, and hence also tensors in $\C^p \otimes \C^q \otimes \C^r$, trilinear maps $\C^p \otimes \C^q \otimes \C^r \to \C $ and trihomogeneous polynomials in $R_{1,1,1}$,  always modulo constants, while the Segre Variety $V_{p-1,q-1,r-1}\subset \P^m$  parameterizes the hypermatrices of tensor rank 1, i.e such that $P_A = \ell\ell '\ell ''$, where $\ell \in  \C[x_1,\ldots, x_{p}]_1$,\, $\ell ' \in  \C[y_1,\ldots, y_{q}]_1$ and $\ell '' \in  \C[z_1,\ldots, z_{r}]_1$. 

The $slices$ of the hypermatrix $A$ along the $x$-direction, respectively the $y$-direction, the $z$-direction ($x$-slices, $y$-slices and $z$-slices for short) are the matrices $A_{\bar{i}jk}$, respectively, $A_{i\bar{j}k}$, $A_{ij\bar{k}}$, defined as follows:

$A_{\bar{i}jk} = (a_{\bar{i}jk})$ ; $\qquad \bar{i}$ fixed in $\{1,\ldots,p\}$;

$A_{i\bar{j}k} = (a_{i\bar{j}k})$ ; $\qquad\bar{j}$ fixed in $ \{1,\ldots ,  q\}$;

$A_{ij\bar{k}} = (a_{ij\bar{k}})$ ; $\qquad\bar{k}$ fixed in $ \{1,\ldots ,  r\}$;

We say that two slices along the same direction are parallel to each other.

The $x$-rank of $A$ is the rank of the $x$-slices $A_{1jk}, \dots, A_{p,jk}$ as vectors in the space of matrices $q \times r$, and similarly for the $y$-rank and the $z$-rank.
\b A 0-$slice$ is a slice whose elements are all zeros.
\end{defn}

 \begin{remark}\label{rk} \rm Notice that, contrarily to the case of matrices, where the row-rank is equal to the column rank, for a tridimensional hypermatrix $x$-rank, $y$-rank and $z$-rank need not to be equal: for example, consider a $(2,2,3)$-hypermatrix, whose $x$-slices are 
 $$ \left( \begin{array} {cc}   1 &0 \cr 0&1 \end{array}\right), \; \left( \begin{array} {cc}   0 &1 \cr 1&0 \end{array}\right), \; \left( \begin{array} {cc}   1 &1 \cr 1&0 \end{array}\right).$$
 
 The $x$-rank is 3, while the $y$-rank and the $z$-rank are 2.
  \end{remark}
  
 \begin{defn}\label{Trk} \rm A tensor $t\in\C^p\otimes\C^q\otimes\C^r$ is said to be {\it decomposable} if there exist $u\in \C^p$, $v\in \C^q$, $w \in \C^r$ such that $t= u\otimes v\otimes w$. Given a tensor $T_A\in \C^p\otimes\C^q\otimes\C^r$, the {\it tensor rank} of $T_A$, denoted $Trk\, A$, is the minimal length of an expression of $T_A$ as a sum of decomposable tensors. 
 \end{defn} 
 
In this paper we use the following notations:

\begin{nota}\label{vectpt} \rm  If $(p_1,\dots, p_p)$ is a point of $\P^{p-1}$, with $P$ we denote both the point in $\P^{p-1}$ and the vector $\left( \begin{array} {cc}  p_1\cr \vdots \cr p_{p} \end{array}\right)$ in $\C^p$, and the same for $Q= (q_1,\dots,q_q)\in \P^{q-1}, T=(t_1,\dots,t_r)\in \P^{r-1}$. Hence $X= \left( \begin{array} {cc}  x_1\cr \vdots \cr x_{p} \end{array}\right)$ also denotes a variable point in $\P^{p-1}$, and the same for $\, Y= \left( \begin{array} {cc}  y_1\cr \vdots \cr y_q \end{array}\right)$ , $\,Z= \left( \begin{array} {cc}  z_1\cr \vdots \cr z_r \end{array}\right)$.
\end{nota}
\begin{lem}
A linear change of the coordinates $x_i$ in the space  $\C^p$, respectively $y_j$ in $ \C^q$, $z_k$ in  $\C^r$, corresponds to operating a linear combination on the $x$-slices, respectively $y$-slices, $z$-slices  of $A$. 
\b More precisely, if $A$ and $A'$ are the hypermatrices of the tensor in the coordinates $x,y,z$ and $x,y,z'$ respectively, and $Z= E\,Z'$  is the change of coordinates in $\C^r$, then the $z'$-slices of $A'$ are given by:

$$ \left( \begin{array} {cc}  A'_{ij1}\cr \vdots \cr A'_{ijr} \end{array}\right)\;=\; ^t\!E  \left( \begin{array} {cc}  A_{ij1}\cr \vdots \cr A_{ijr} \end{array}\right)$$

and analogously for $x$ and $y$.

\end{lem}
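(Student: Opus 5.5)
The plan is to work with the trihomogeneous polynomial $P_A$, since a change of coordinates is a substitution in $P_A$. I would then read off the new coefficients and collect them slice by slice. By symmetry of the roles of $x,y,z$ it is enough to treat the change $Z=E\,Z'$ in $\C^r$, with $E=(e_{kh})$ invertible $r\times r$, so that $z_k=\sum_{h} e_{kh} z'_h$. The cases of $x$ and $y$ are word-for-word the same after permuting indices.

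The key step is the substitution itself. We have $P_A=\sum_{i,j,k} a_{ijk}x_iy_jz_k$, and replacing $z_k$ gives
$$P_A=\sum_{i,j,h}\Big(\sum_k a_{ijk}e_{kh}\Big)x_iy_jz'_h .$$
The hypermatrix $A'$ of the same tensor (equivalently, of the same trilinear form $f_A$) in coordinates $x,y,z'$ is by definition the coefficient family of $P_A$ in these variables. Hence
$$a'_{ijh}=\sum_k e_{kh}\,a_{ijk}=\sum_k ({}^tE)_{hk}\,a_{ijk}.$$
Fixing $h$ and letting $i,j$ vary, this says exactly that the $h$-th $z'$-slice is $A'_{ijh}=\sum_k ({}^tE)_{hk}A_{ijk}$. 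This is a linear combination of the $z$-slices, and stacking the slices into a column gives the displayed formula with ${}^tE$.

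The same result can be checked through the tensor viewpoint. The new basis vectors are $\zeta'_h=\sum_k e_{kh}\zeta_k$, because coordinates transform by $E$ while basis vectors transform contragrediently, by the columns of $E$. Then $a'_{ijh}=f_A(\e_i,\eta_j,\zeta'_h)$ gives the same expression by trilinearity. Conversely, since $E$ is invertible, any invertible linear recombination of the $z$-slices arises in this way, which justifies the first, informal sentence of the statement.

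The only real obstacle is bookkeeping: placing the transpose correctly, which depends on the convention $Z=EZ'$ relating coordinates rather than basis vectors. I would settle this by the explicit coefficient comparison above rather than by abstract reasoning.
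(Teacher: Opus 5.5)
Your proposal is correct and is essentially the paper's proof: substitute $z_k=\sum_h e_{kh}z'_h$ into $P_A$, read off $a'_{ijh}=\sum_k e_{kh}a_{ijk}$, and interpret this slice by slice as multiplication by ${}^t\!E$. One minor remark on your side check: evaluating $f_A(\e_i,\eta_j,\zeta'_h)$ is the trilinear-form viewpoint, not the tensor one (re-expanding $T_A$ itself in the basis $\zeta'_h$ would give $E^{-1}$ instead of ${}^t\!E$), but your main argument does not rely on it.
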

\proof
Let us consider a linear change of the coordinates $z_k$ on $\C^r$ (the proof is analogous for the other two sets of coordinates), where the new coordinates $z'_k$ are such that $z_k = \sum _{\lambda = 1} ^r e_{k\lambda}z'_\lambda$ and $E = (e_{k\lambda})$ is an invertible $r\times r$ matrix.
We have 
$$P_A = \sum_{ijk}a_{ijk}x_iy_jz_k =  \sum_{k=1}^r(\sum_{ij}a_{ijk}x_iy_j)\sum _{\lambda = 1}^r e_{k\lambda} z'_\lambda = \sum_{ij\lambda}(\sum_{k =1}^r a_{ijk}e_{k\lambda})x_iy_jz'_\lambda$$
Hence the hypermatrix of the coefficients of $P_A$ with respect to the coordinates $x_i$, $y_j$, $z'_\l$ is  \linebreak $A' = (a'_{ij\l})$, with $a'_{ij\lambda}= \sum_{k=1}^ra_{ijk}e_{k\lambda}$.  So for any fixed $\bar \l$ the $z'$-slice of $A'$ is 
$$ A'_{ij\bar \l}=(a'_{ij\bar \l})= e_{1\bar \l}A_{ij1}+\dots+ e_{r\bar \l}A_{ijr}.$$

\prfend

\begin{remark} \rm
Notice that the notions we are interested in for $A$ and $f_A$ (such as degeneracy, essential format) are invariant respect to the operations above (e.g. see \cite{O}).
\end{remark}

\subsection{Conciseness}

\begin{defn}\label{concise}\rm We say that the hypermatrix $A$ is $non$-$concise$ if, modulo linear changes of coordinates, $P_A$ can be written using less coordinates, and $concise$ if this does not happen; for example, when a ``slice" of $A$ is a 0-matrix, $A$ is non-concise. More precisely, $A$ is non-concise if there are linear forms $x'_1,...,x'_{p'} \in  \C[x_1,\ldots, x_{p}]$,  $y'_1,\ldots, y'_{q'} \in  \C[y_1,\ldots, y_{q}]$ and $z'_1,\ldots, z'_{r'} \in  \C[z_1,\ldots, z_{r}]$, such that the following inequalities hold: $p'\leq p$; $q'\leq q$; $r'\leq r$, and at least one of these is strict. Then we have 
  $$P_A\in \C[x'_1,\ldots, x'_{p'}; \,y'_1,\ldots, y'_{q'}; \, z'_1,\ldots, z'_{r'}]_{1,1,1}.$$
 If this happens, we say that $A$, or $P_A$, can be reduced to the format $(p',q',r')$.

  \bigskip \par Now we want to show that there exists a minimum for the formats $(p',q',r')$ to which $A$ can be reduced, and the forthcoming \ref{coincise} proves it.
This allow us to define the $essential$ $format$ of $A$,  denoted $F_{ess}(A)$, as the minimum format $(p',q',r')$, such that $P_A$ can be written as above; in other words, $A$ is concise if and only if $F_{ess}(A)= (p,q,r)$.
 \end{defn}
 
We prefer the denomination above (essential format) rather than the older one of "index ranks" (e.g. see [T]), not to be confused with "tensor rank".

 \begin{prop}\label{coincise} A $(p,q,r)$ hypermatrix $A$ is non-concise if and only if via linear combinations of its slices we can reduce it in a form containing at least a 0-slice. 
\b More precisely, if $F_{ess}(A) = (p',q',r')$, we have $p'=x$-rank, $q'= y$-rank, $r'=z$-rank, that is, via linear combinations of its slices we can obtain $p-p'$, $q-q'$ and $r-r'$ 0-slices in the three directions, respectively.
\end{prop}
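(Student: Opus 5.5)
The plan is to work one direction at a time, using the Lemma on changes of coordinates. Fix the $x$-direction. Write $P_A=\sum_{i=1}^p x_i\, B_i(y,z)$, where $B_i=\sum_{j,k}a_{ijk}y_jz_k$ is the bilinear form attached to the $x$-slice $A_{ijk}$ with $i$ fixed. The $x$-rank $\rho_x$ is $\dim\langle A_{1jk},\dots,A_{pjk}\rangle$ in the space of $q\times r$ matrices.

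\textbf{Step 1 (slices).} By the Lemma, a change of coordinates $X=EX'$ replaces the $x$-slices by the entries of ${}^tE$ applied to the column of slices. The span of the slices is therefore invariant, and $\rho_x$ is invariant under all coordinate changes. Choose ${}^tE$ invertible so that its first $\rho_x$ rows pick out a basis of the span. Its remaining $p-\rho_x$ rows should give the vanishing combinations, which is possible by row reduction: extend a basis of the space of relations among the slices to obtain an invertible matrix. Then $A'$ has exactly $p-\rho_x$ $x$-slices equal to zero.

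\textbf{Step 2 (variables).} Translate Step 1 into variables. Since $P_A=\sum_{\lambda}x'_\lambda B'_\lambda$ with $B'_\lambda=0$ for $\lambda>\rho_x$, $P_A$ is written using only $x'_1,\dots,x'_{\rho_x}$. Doing the same independently in the $y$- and $z$-directions is legitimate: changes of coordinates in different sets of variables commute, and the Lemma shows that a change in $y$ or $z$ acts on each $x$-slice by $Q\,A_{ijk}$ or $A_{ijk}\,S$ with $Q,S$ invertible. Such an action preserves linear relations among the $x$-slices, so the zero $x$-slices stay zero. Hence $A$ reduces to the format $(\rho_x,\rho_y,\rho_z)$, obtained exactly by creating $p-\rho_x$, $q-\rho_y$ and $r-\rho_z$ zero slices.

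\textbf{Step 3 (minimality).} This is the key point. Suppose $P_A\in\C[x'_1,\dots,x'_{p'};\,y';\,z']_{1,1,1}$ with $x'_1,\dots,x'_{p'}$ linear forms in the $x$-variables. I first make them linearly independent, discarding dependent ones and rewriting the coefficients, which only lowers $p'$. Then I complete them to a coordinate system $x'_1,\dots,x'_p$. The $y'$- and $z'$-forms are polynomials in $y$ and $z$, so $P_A=\sum_{\lambda\le p'}x'_\lambda C_\lambda(y,z)$. In the new $x$-coordinates, the slices with $\lambda>p'$ are therefore zero. By Step 1 the span of the $x$-slices has dimension $\rho_x$ in any coordinates, so $\rho_x\le p'$. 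The same argument gives $\rho_y\le q'$ and $\rho_z\le r'$.

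\textbf{Conclusion.} So $(\rho_x,\rho_y,\rho_z)$ is attained and is componentwise $\le$ every reachable format. It is therefore the minimum, which proves existence of $F_{ess}(A)$ and the identifications $p'=\rho_x$, $q'=\rho_y$, $r'=\rho_z$. The first claim follows: $A$ is non-concise if and only if some rank is below its dimension, if and only if a $0$-slice can be produced. The only delicate point is the bookkeeping in Step 3, namely reducing arbitrary linear forms to an honest coordinate change.
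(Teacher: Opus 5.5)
Your proof is correct and follows essentially the paper's route. The paper also performs a simultaneous change $X=CX'$, $Y=DY'$, $Z=EZ'$ and shows that $x'_\lambda$ drops out exactly when the $\lambda$-th column of $C$ is a linear relation among the $x$-slices; it uses the injectivity of $B\mapsto {}^t\!D\,B\,E$, which is your Step 2 observation. From this it follows that exactly $p-\rho_x$ of the $x$-variables can be removed and no more. Your Step 3, which first turns arbitrary linear forms into an honest coordinate system before invoking invariance of the span, makes explicit a point the paper passes over with ``going on''.
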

\proof
Setting  $X= \left( \begin{array} {cc}  x_1\cr \vdots \cr x_{p} \end{array}\right)$ and $X'= \left( \begin{array} {cc}  x'_1\cr \vdots \cr x'_{p} \end{array}\right)$, consider the change of coordinates in $\C^p$ given by $X=CX'$ where $C= \left( \begin{array} {ccc}  c_{11}& \dots & c_{1,p}\cr \vdots & \vdots & \vdots \cr c_{p,1}& \dots & c_{p,p} \end{array}\right)$ is an invertible matrix, and analogously $Y=DY'$, $Z=EZ'$. We have:

$$f_A (^t\!X, \, ^t\!Y,\, ^t\!Z)=  \sum_{i,j,k}a_{ijk}x_iy_jz_k= (^t\!Y A_{1jk} Z)x_1+\dots + (^t\!YA_{pjk}Z)x_{p}=$$
$$=  \left( \begin{array} {ccc}  ^t\!Y A_{1jk} Z &\dots & ^t\!YA_{pjk}Z \end{array}\right) \; X=$$
$$=\left( \begin{array} {ccc}  ^t\!(DY')\,A_{1jk}\,EZ' &\dots & ^t\!(DY')\,A_{pjk}\,EZ'  \end{array}\right) \; CX'=$$
$$= \left( \begin{array} {ccc} ^t\!Y'(^t\!D\,A_{1jk}\,E)Z'  &\dots & ^t\!Y'(^t\!D\,A_{pjk}\,E)Z'   \end{array}\right) \; CX'$$

Now consider, for example, the variable $x_{p}$; it does not appear in this writing if and only if $ \left( \begin{array} {ccc} ^t\!Y'(^t\!D\,A_{1jk}\,E)Z'  &\dots & ^t\!Y'(^t\!D\,A_{pjk}\,E)Z'   \end{array}\right) \; \left( \begin{array} {cc}  c_{1,p}\cr \vdots \cr c_{p,p} \end{array}\right)=0$, that is, iff 
$$^t\!Y'( c_{1,p}\,^t\!D\,A_{1jk}\,E + \dots + c_{p,p}\,^t\!D\,A_{pjk}\,E)Z'=0 \Leftrightarrow c_{1,p}\,^t\!D\,A_{1jk}\,E + \dots + c_{p,p}\,^t\!D\,A_{pjk}\,E=0 $$
$$\Leftrightarrow \; ^t\!D\,(c_{1,p}A_{1jk}+ \dots + c_{p,p}A_{pjk})\,E =0 $$
$$\Leftrightarrow c_{1,p}A_{1jk}+ \dots + c_{p,p}A_{pjk}=0 $$
Hence we can make an $x$-variable (in this example $x_p$) disappear in the writing of $f_A$ if and only if the $x$-slices of $A$ are linearly dependent. Going on, we see that it is possible to use, for $f_A$, exactly $p'$ of the $x$-variables, and not less, if and only if the $x$-rank of $A$ is $p'$.
\qed

  \begin{remark} \rm Let us recall what happens in the trivial cases of linear and bilinear maps, which is no longer true for trilinear maps.
  \par 1) Let $f:V \to \C$ be a linear map and  $\e_1,\dots, \e_n$ a basis of $V$ such that $Ker f=<e_{k+1},\dots, \e_n>$; then $f$ can be written using $k=dim V - dim Ker f$ variables, i. e. the essential format of $f$ is $rk f$.
  \par 2) Let $f: V\times W \to \C$ be a bilinear map, $f(X ,Y)=\,^t\!X A Y$, with $dim V=p, dim W= q$, and fix bases $(\e_1,\dots,\e_p), (\eta_1,\dots,\eta_q)$. The linear map $g:W \to V$, $Y \mapsto AY$, with a suitable change of basis $Y=DY'$, will have an associated matrix with $q-rk A$ 0-columns (and not more), so $f$ can be written as $f(X ,Y')=\,^t\!X (AD) Y$ and $y_{rk A+1}, \dots, y_q$ have disappeared (notice that if $rk A= q$, all $y_j$ have to stay there). Now let $B:= AD$ and consider the linear map $h: V \to W$, $X\mapsto \,^t\!BX$; as before, there is a linear change of coordinates $X=CX'$ such that $^t\!B C$ has the maximum number of 0-columns, i.e. $p-rk (B)=p-rkA$. Hence $^t\!C B$ = $^t\!C AD  $ has the maximum number of 0- rows.
  In conclusion, the essential format of a bilinear map is $(rk A, rk A)$, i.e. a bilinear map is coincise iff $p=q=rkA$.
  \end{remark}

\medskip
  \begin{remark}\label{concisealg} \rm   If $r > p q$, $A$ is never concise; in fact, its $z$-slices are $r$ matrices $p\times q$, hence at least $r-pq $ are linearly dependent from the others. 
  \b Another way to see this is the following:
  
  An immediate algorithm for conciseness works via flattenings (see \cite{O}): $A$ is concise if and only if its three flattenings, which are the following matrices of format $(p,q\times r)$, $(q,p\times r)$, $(p\times q, r)$:
  $$B = \left( \begin{array} {ccc} a_{111} & \dots & a_{1qr}\cr \dots & \dots & \dots \cr a_{p11} & \dots & a_{pqr}\end{array}\right), C = \left( \begin{array} {ccc} a_{111} & \dots & a_{p1r}\cr \dots & \dots & \dots \cr a_{1q1} & \dots & a_{pqr}\end{array}\right), D = \left( \begin{array} {ccc} a_{111} & \dots & a_{pq1}\cr \dots & \dots & \dots \cr a_{11r} & \dots & a_{pqr}\end{array}\right)$$ have rank $p,q,r$ respectively. This implies that if $r > p q$, $A$ is never concise. 
\end{remark}

\medskip
  \begin{remark}\label{conciseclosed}
Let us notice that the subset of non-concise hypermatrices is closed since it corresponds either to the whole space or to polynomials which can be written by using a smallest set of variables.
\end{remark}

\subsection{Determinantal schemes}

 In the following with determinantal scheme we mean a scheme $\mathcal S$ whose ideal is generated by the maximal minors of a matrix $B$ of linear forms (hence as a set it is given by the points in which the matrix has not maximal rank). Namely, let  $u\leq v$ and $B$ be a $(u \times v)$-matrix of linear forms in $n$ variables $x_1, \dots, x_n$; so the maximal minors are the determinants of the submatrices $u \times u$, and $\bS \subseteq \P^{n-1}$. We say that the scheme $\bS$ is associated to the matrix $B$.
\par \medskip
 Since $\emptyset \subseteq S \subseteq \P^{n-1}$, one has $0\leq codim \bS \leq n$. The expected codimension of $\bS$ is  (see \cite{BV})
$$\exp codim \bS =\min \{v-u+1, n\}.$$

This is actually the codimension of $\bS$ if the matrix $B$ is generic,  i.e. in the space $\P^{uv-1}$ parameterizing $u\times v$ matrices with coordinates $(w_{ij})$, $i\in \{1,\ldots u\}$, $j\in \{1,\ldots v\}$, the scheme $\bS$ corresponds to the intersection between the variety defined by the $u\times u$ minors of $(w_{ij})$, whose codimension is known to be $v-u+1$ (e.g. see  \cite{ACGH}), with the generic $(n-1)$-dimensional projective subspace whose parametric equations are given by the linear forms which are the entries of $B$.

If $\bS$ fails to have the expected codimension, then  $codim \bS <\min \{v-u+1, n\}$ (see \cite{BV}).
\b Let $P$ be a point of $\bS$; the Jacobian matrix of $\bS$ at $P$ is a ${v\choose u} \times n$-matrix , and in the following it will be denoted by $J_\bS(P)$.
\b The codimension of the tangent space $T_P(\bS)$ of $\bS$ at $P$, which is the rank of  $J_\bS(P)$,  is equal to the codimension of $\bS$ at $P$, unless $P$ is singular for $\bS$.  We say that the expected rank for $J_\bS(P)$ is 
$$ \exp {\rm rk}\, J_\bS(P)=\min \{v-u+1, n\}$$ 

\begin{defn}\label{determ} We say that $P\in \bS$ is a {\it degenerate point} for $\bS$ if ${\rm rk}\, J_\bS(P)<\min \{v-u+1, n\}$; hence $P$ is degenerate if either $\bS$ has the expected dimension at $P$ and $P$ is singular, or the dimension of $\bS$ at $P$ is greater than the expected one (in which case $P$ can be either a smooth or a singular point).
\par  We say that $\bS$ is {\it degenerate} if it has at least one degenerate point.
\par  We say that a point $P\in \bS$ is {\it bi-degenerate} if all the $(u-1)\times (u-1)$-minors of $B$ vanish at $P$. Since such a point is singular for $\bS$ (see \cite{BV}), any bi-degenerate point is also a degenerate point.
\end{defn}

\begin{remark}\label{sqmatrix} The set of bi-degenerate points has codimension $\leq 2(v-u+2)$ (\cite{BV}, Theorem 2.1), so, for example, when $u=v$, $\bS$ always possesses bi-degenerate points if $n\geq 5$.
\end{remark}

\begin{remark}\label{jacobfunct} Let  $\bS \subseteq \P^{n-1}$  be a determinantal scheme, defined by a $u \times v$ ($u\leq v$)  matrix $B$ of linear forms: 
$$ B= \left(\begin{array} {ccc} B_{11} & \ldots & B_{1v}\cr  & \ldots &  \cr B_{u1}&\ldots & B_{uv} \end{array} \right), \quad B_{ij}= \sum_{k=1}^n a_{ijk}x_k$$
In the following $\alpha $ denotes a multi-index: $\alpha = (j_1,\dots, j_u)$, with $1\leq j_1 \leq \dots \leq j_u \leq v$, and we shall always assume that an order is chosen in the set of these multi-indexes, so we can write $\alpha =1, \dots, {v\choose u}$. 
\b Setting $$B_{\alpha} :=  \left|\begin{array} {ccc} B_{1j_1} & \ldots & B_{1j_u}\cr  & \ldots &  \cr B_{uj_1}&\ldots & B_{uj_u} \end{array} \right|, \quad \alpha= (j_1,\dots, j_u)$$

\a we have $I_X=(B_1, \dots, B_{v\choose u})$, and the Jacobian matrix of $X$ is given by 
$$ J(\bS)= (\partial_k B_\alpha) =\left(\begin{array} {ccc} \partial_1 B_1 & \ldots & \partial_n B_1\cr  & \ldots &  \cr \partial_1 B_{v\choose u}&\ldots & \partial_n B_{v\choose u} \end{array} \right).$$

If $f_{11}, \dots, f_{uu}$ are functions of $x_1, \dots, x_n$, it is easy to see, for example by induction, that 
$$\partial_k \left|\begin{array} {ccc} f_{11} & \ldots & f_{1u}\cr  f_{21} & \ldots & f_{2u}\cr & \ldots &  \cr f_{u1}&\ldots & f_{uu} \end{array} \right| = 
\left|\begin{array} {ccc} \partial_k f_{11} & \ldots & \partial_k f_{1u}\cr  f_{21} & \ldots & f_{2u}\cr & \ldots &  \cr f_{u1}&\ldots & f_{uu} \end{array} \right|  +  \left|\begin{array} {ccc} f_{11} & \ldots & f_{1u}\cr  \partial_k f_{21} & \ldots & \partial_k f_{2u}\cr & \ldots &  \cr f_{u1}&\ldots & f_{uu} \end{array} \right|   +
\dots + \left|\begin{array} {ccc} f_{11} & \ldots & f_{1u}\cr  f_{21} & \ldots & f_{2u}\cr & \ldots &  \cr \partial_k f_{u1}&\ldots & \partial_k f_{uu} \end{array} \right| .$$

Hence for any $\alpha$ we have: $$ \partial_k B_\alpha=\left|\begin{array} {ccc} a_{1j_1k} & \ldots & a_{1j_uk}\cr  B_{2j_1} & \ldots & B_{2j_u}\cr & \ldots &  \cr B_{uj_1}&\ldots & B_{uj_u} \end{array} \right| +
  \left|\begin{array} {ccc} B_{1j_1} & \ldots & B_{1j_u}\cr  a_{2j_1k} & \ldots & a_{2j_uk}\cr & \ldots &  \cr B_{uj_1}&\ldots & B_{uj_u} \end{array} \right| +\dots + 
  \left|\begin{array} {ccc} B_{1j_1} & \ldots & B_{1j_u}\cr  B_{2j_1} & \ldots & B_{2j_u}\cr & \ldots &  \cr a_{uj_1k}&\ldots & a_{uj_uk} \end{array} \right| 
  \eqno{(\star)}$$
\a Now consider the point $P=(1,0,\dots,0) \in \P^{n-1}$; we have $B_{ij}(P)= a_{ij1}$, hence if we assume $a_{1j1}=0$ for $j=1,\dots, v$, we have $B_{1j}(P)= a_{1j1}=0$ and $(\star)$ becomes:
$$\partial_1 B_\alpha (P)=0, \qquad  \partial_k B_\alpha (P)=\left|\begin{array} {ccc} a_{1j_1k} & \ldots & a_{1j_uk}\cr  a_{2j_11} & \ldots & a_{2j_u1}\cr & \ldots &  \cr a_{uj_11}&\ldots & a_{uj_u1} \end{array} \right| {\; \rm for} \: k=2,\dots, n, {\; \rm and\; for \; any\;} \alpha    \eqno{(\heartsuit)}$$
\end{remark}

\subsection{Degeneracy and associated determinantal schemes}

\par We give here some definitions for the tridimensional case following the general definitions for hypermatrices (see \cite{GKZ}). In the following, as always, $A$ is a hypermatrix of format $(p, q, r)$, with $2\leq p\leq q \leq r$; recall notations \ref{vectpt}.
\begin{defn} \rm 
To a 3-dimensional hypermatrix $A$ we associate three matrices of linear forms, respectively in
$\C[x_1,\ldots , x_p]$, $\C[y_1,\ldots , y_q]$, $\C[z_1,\ldots , z_r]$, in the following way:
\begin{itemize}
\item $ L = (L_{jk})$ is a $q \times r$ matrix, where $L_{jk} =  \sum^p_{i=1} a_{ijk}x_i$ ;
\item $M = (M_{ik})$ is a $p \times r$ matrix, where $M_{ik} =  \sum^q_{j=1} a_{ijk}y_j$ ;
\item $N = (N_{ij})$ is a $p \times q$ matrix, where $N_{ij} = \sum^r_{k=1} a_{ijk}z_k$ .
\end{itemize}
\par If $P\in \P^{p-1}$, with $L(P)$ we denote the matrix $ L (P)= (L_{jk}(P))$, and the same for $M$ and $N$.
\medskip \par 

The matrices $L$, $M$, $N$ define three determinantal projective  schemes, respectively $\bL \subset \P ^{p-1}$,  $\bM\subset \P ^{q-1}$, $\bN\subset \P ^{r-1}$. 
 We say that $L$, $M$, $N$ are the $associated$ $matrices$ to the hypermatrix $A$, as well as to the tensor $T_A$, and $\bL $, $\bM$, $\bN$ are the $associated$ $schemes$.
\end{defn}

\medskip
\begin{defn}\label{kernel} \rm The $ kerne$l $K_A$ or $K_{f_A}$ of a trilinear form $f_A$ associated to $A$ is the set of all
triplets of points $(P,Q,T)\in \P^{p-1} \times \P^{q-1 }\times \P^{r-1 }$ whose coordinates satisfy the three following conditions:

$\quad  f_A(P,Q,Z) = 0  \quad \quad \forall Z \in \P^{r-1}$,

$ \quad f_A(P,Y,T) = 0  \quad \quad \forall Y \in \P^{q-1} $,

$ \quad f_A(X,Q,T) = 0  \quad \quad \forall X \in \P^{p-1}$. 

\a It is immediate to see that the kernel $K_A$ is the set of all
triplets of points $(P,Q,T)\in\ \P^{p-1} \times \P^{q-1 }\times \P^{r-1 }$ whose coordinates satisfy the system  of $p+q+r$ equations:
$$(*)\qquad \qquad \qquad
\left\{ \begin{array}{l}
\sum_{i, j}a_{ijk}x_iy_j = 0;   \quad  k= 1,\ldots, r \qquad \qquad{(*1)}\\
\\
\sum_{ i, k}a_{ijk}x_iz_k = 0;   \quad  j= 1,\ldots, q \qquad \qquad{(*2)}\\
\\
\sum_{j, k}a_{ijk}y_jz_k = 0;   \quad   i= 1,\ldots, p \qquad \qquad{(*3)}. \end{array}\right.
$$

We say that a hypermatrix $A$ (or the map $f_A$) is $degenerate$ if $K_A \neq \emptyset$. \end{defn}

\begin{remark}\label{matid} \rm In the previous notations it is immediate to see that the following matricial identities hold: 

$$^t\!Y\,L = ^t\!\!X\,M,  \qquad  \qquad ^t\!X\,N = ^t\!\!(L\,Z), \qquad  \qquad M\,Z = N\,Y$$

It is also immediate to see that we can write system $(*)$ as 
$$(*)\qquad 
\left\{ \begin{array}{l}
^t\!Y\,L=0 \qquad \qquad{(*1)}\\
\\
^t\!\!X\,N=0 \qquad \qquad {(*2)}\\
\\
M\,Z=0 \qquad  \qquad {(*3)}. \end{array}\right.
$$
\end{remark}

\begin{prop}\label{notemp} In the previous notations we have that 
\dd i) The existence of a point in $\bL$ implies the existence of a point  in $\bM$ and viceversa. 

More precisely, if $P\in \bL$ and a non-trivial relation among the rows of $L(P)$ is given by $ ^t\!Q\,L (P)=0$, then $Q\in \bM$ and a non-trivial relation among the rows of $M(Q)$ is given by $^t\!\!P\,M(Q)=0$; and viceversa. 
\dd ii)  If the system given by $(*1)$ and $(*2)$ has a solution $(P,Q,T)$, then $P\in \bL, \; Q\in \bM, \; T\in \bN$; in particular if $A$ is degenerate its associated schemes $\bL$, $\mathcal M$ and $\mathcal N$ are not empty. 
\end{prop}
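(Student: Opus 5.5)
The plan is to work directly from the matricial identities of Remark \ref{matid}, and in particular from $^t\!Y\,L = {}^t\!X\,M$. This identity holds as an identity of row vectors of bilinear forms in $(X,Y)$: both sides have $k$-th entry $\sum_{i,j}a_{ijk}x_iy_j$. We may therefore specialize $X$ and $Y$ independently to points $P\in\C^p$ and $Q\in\C^q$, which gives
$$^t\!Q\,L(P)={}^t\!P\,M(Q),$$
a vector in $\C^r$ whose $k$-th entry is $\sum_{i,j}a_{ijk}p_iq_j$.

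For part i), we first recall the standing assumption $p\le q\le r$. Under it, $L(P)$ is a $q\times r$ matrix with $q\le r$, so $P\in\bL$ exactly when its rows are linearly dependent, i.e. when some nonzero $Q\in\C^q$ satisfies $^t\!Q\,L(P)=0$. Similarly $M(Q)$ is $p\times r$ with $p\le r$, so $Q\in\bM$ exactly when some nonzero $P$ satisfies $^t\!P\,M(Q)=0$.

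Now take $P\in\bL$ with a nontrivial relation $^t\!Q\,L(P)=0$. The displayed identity then gives $^t\!P\,M(Q)=0$. Since $P\neq0$, this is a nontrivial relation among the rows of $M(Q)$, so $M(Q)$ does not have maximal rank and $Q\in\bM$. The converse follows from the same identity read in the other direction. The only point needing care is that membership in the determinantal scheme, as a set, means "not of maximal rank", and that maximal rank here is the number of rows; this is guaranteed by $p\le q\le r$.

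For part ii), suppose $(P,Q,T)$, with all three components nonzero, solves $(*1)$ and $(*2)$. Equation $(*1)$ is $^t\!Q\,L(P)=0$, so $P\in\bL$ because its $q\le r$ rows are dependent. By part i), or directly from the identity, it follows that $^t\!P\,M(Q)=0$ and hence $Q\in\bM$. For $\bN$, equation $(*2)$ is $^t\!P\,N(T)=0$, where $N(T)$ is $p\times q$ with $p\le q$. A nonzero $P$ in the left kernel means the rows are dependent, so $N(T)$ has rank less than $p$ and $T\in\bN$. Finally, if $A$ is degenerate, any point of $K_A$ satisfies $(*)$, in particular $(*1)$ and $(*2)$, so all three schemes are nonempty.

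I do not expect a real obstacle. The one subtlety is to make sure that substituting different points for $X$ and $Y$ in the identity is legitimate, which holds because each side is a bilinear expression, and to check each time that "rank drop" is taken with respect to the smaller dimension, using $p\le q\le r$.
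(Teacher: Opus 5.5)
Your proof is correct and follows the paper's argument: you specialize the identity $^t\!Y\,L={}^t\!X\,M$ of Remark \ref{matid} to obtain $^t\!Q\,L(P)={}^t\!P\,M(Q)$, and you read off membership in $\bL$, $\bM$, $\bN$ from the rank drops forced by $(*1)$ and $(*2)$. You are also slightly more careful than the paper: for the $q\times r$ matrix $L(P)$ the relevant inequality is $q\le r$, and you use it, whereas the paper cites $p\le q$ at that step.
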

\proof Saying that $ ^t\!Q\,L (P)=0$ means that $Q$ gives a null linear combination of the rows of $L(P)$, and since $p\leq q$ this implies $P\in \bL$; analougously for $\bM$ and $\bN$.

{\it i)} We have $ ^t\!Q\,L (P)= ^t\!\!P\,M(Q)$ (see \ref{matid}), hence  $ ^t\!Q\,L (P)=0 \iff  ^t\!\!P\,M(Q)=0$, i.e. $P\in \bL \iff Q\in \bM$.

\dd {\it ii)} If  the subsystem $(*1) + (*2)$ has a solution $X=P$, $Y=Q$, $Z=T$, then we have 
$$ ^t\!Q\,L (P)= ^t\!\!P\,M(Q)=0, \quad ^t\!\!P\,N(T)=0 .$$
Hence $P\in \bL$, $Q\in \bM$ and $T\in \bN$. 

\prfend

\begin{remark}\label{elop}
Let $B$ be a $q \times r$ matrix; the substitution of one of its rows $R$ (respectively column) with a linear combination of the rows (respectively columns) where $R$ appears with a non-zero coefficient, is called an elementary operation on the rows, respectively columns. This  gives a new matrix $B'$, respectively $B''$, which can be seen as $B'=E\,B$, respectively $B''=B\,E$ where $E$ is a suitable invertible $r\times r$, respectively $q\times q$, matrix.

Given a hypermatrix $A$, the matrix $L$ is $L = (\sum_{i=1}^pa_{ijk}x_i)$, hence we have $P_A =  ^t\!Y\,L\,Z$. If we make an elementary operation on the rows of $L$, we get $L'=E\, L$ with $E$ invertible, and this gives  a new hypermatrix $A'$ with 
$$P_{A'}= ^t\!Y\,L'\,Z = ^t\!Y\,(E\, L)\,Z= (^t\!Y\,E)\, L\,Z= ^t\!Y'\,L\,Z$$
where $Y'=^t\!E\,Y$, i. e. we are considering the same tensor $A$ after a change of coordinates in the $y_j$'s. Clearly, if we operate on the columns of $L$ we are making a change of coordinates in the $z_k$'s. Reasoning analogously for $M$ and $N$, we see that an elementary operation on the rows or on the columns of $L$, respectively $M$,  $N$, gives the same tensor modulo a change of coordinates not involving the $x_i$'s, respectively the $y_j$'s, the $z_k$'s.
\end{remark}

We conclude this section with the following
\begin{prop}
If a hypermatrix $A$ is non-concise, then its three associated  schemes are degenerate.
\end{prop}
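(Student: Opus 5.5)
The plan is to reduce to a normal form and then look at each associated scheme separately. By Proposition \ref{coincise}, if $A$ is non-concise then, after a linear change of coordinates in one of the three spaces (that is, after linear combinations of parallel slices), $A$ has a $0$-slice. By the Remark after the Lemma on changes of coordinates, degeneracy notions are invariant under such changes. By Remark \ref{elop}, a change of coordinates in the $y$'s or $z$'s acts on $L$ by invertible row or column operations, and these do not change $\bL$. A change in the $x$'s is a projectivity of $\P^{p-1}$ and carries $\bL$ isomorphically, with the same Jacobian ranks. So we may assume $A$ has a $0$-slice in some direction. By symmetry of the roles of $L,M,N$, it suffices to show that $\bL$ is degenerate in each of the three cases: a $0$-slice in the $x$-, $y$- or $z$-direction. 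The matrix $L$ is $q\times r$ with $q\le r$ in the variables $x_1,\dots,x_p$.

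\emph{Case of a zero $x$-slice}, say $a_{pjk}=0$ for all $j,k$. Then no entry of $L$ involves $x_p$. Hence $L(P)=0$ at $P=(0,\dots,0,1)$, so every minor of $L$, of every size, vanishes at $P$. Thus $P\in\bL$ is bi-degenerate (for $q\ge 2$), hence degenerate by Definition \ref{determ}. Alternatively one can read this off from $(\heartsuit)$ in Remark \ref{jacobfunct}: with $P$ moved to the first coordinate point, the first row of $L(P)$ is zero, the whole of $L(P)$ is zero, and so all the determinants in $(\heartsuit)$ vanish. This gives ${\rm rk}\,J_{\bL}(P)=0$.

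\emph{Case of a zero $y$-slice}, say $a_{iqk}=0$. Then the last row of $L$ is identically zero, so all maximal minors vanish identically and $\bL=\P^{p-1}$. The Jacobian is then identically $0$. Its rank $0$ is less than $\min\{r-q+1,p\}\ge 1$, so every point is degenerate.

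\emph{Case of a zero $z$-slice}, say $a_{ijr}=0$. Then the last column of $L$ is zero, and the maximal minors of $L$ are exactly the maximal minors of the $q\times(r-1)$ matrix $L'$ obtained by deleting it. If $r=q$ then $\bL=\P^{p-1}$ and we conclude as in the previous case. If $r>q$, the bound of \cite{BV} gives $\operatorname{codim}\bL\le r-q$ at every point. Moreover ${\rm rk}\,J_{\bL}(P)$ equals the codimension of the tangent space, and this is at most the local codimension. So at every point $P\in\bL$ we get ${\rm rk}\,J_{\bL}(P)\le r-q<r-q+1$. Whenever $r-q<p$ this is also less than $p$, so ${\rm rk}\,J_{\bL}(P)<\min\{r-q+1,p\}$. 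In that situation $\bL$ is nonempty, since its codimension is at most $r-q<p$, and every one of its points is degenerate.

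The main obstacle I expect is the remaining subcase: a zero $z$-slice with $r-q\ge p$. There the expected codimension is $p$, and $\bL$ could a priori be empty; for instance, a general $(2,2,4)$ tensor with one zero $z$-slice gives a general $2\times 3$ matrix in two variables. My first attempt would be to show that $\bL\neq\emptyset$ is forced here. If it is not, I would check whether the statement should be read with the convention that an empty scheme of non-expected type counts as degenerate, or whether this range (e.g.\ $r>pq$, see Remark \ref{concisealg}) has to be excluded or treated separately. Within that, I would look for degenerate points on $\bM$ and $\bN$ instead, using Proposition \ref{notemp}.
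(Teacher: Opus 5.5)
Every case you actually complete is correct, and your analysis is more careful than the paper's. The paper treats only a $0$-slice in the $z$-direction ("say"), although the shapes of $L,M,N$ are not symmetric once $p\le q\le r$ is fixed. In your symmetry reduction, the zero-column case for $\bN$ is harmless because $q-p<r$.

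\textbf{The gap.} It is the subcase you flag yourself: a $0$-slice only in the $z$-direction with $r-q\ge p$ (for $\bM$: $r-p\ge q$). It cannot be closed, because there the proposition is false, and your own example is a counterexample. Take the $(2,2,4)$ tensor $P_A=x_1y_1z_1+x_1y_2z_2+x_2y_1z_2+x_2y_2z_3$, i.e.\ case VI of format $(2,2,3)$ with a zero fourth $z$-slice. Then
\[
L=\left(\begin{array}{cccc} x_1&x_2&0&0\\ 0&x_1&x_2&0\end{array}\right),\qquad
M=\left(\begin{array}{cccc} y_1&y_2&0&0\\ 0&y_1&y_2&0\end{array}\right).
\]
The maximal minors generate $(x_1^2,x_1x_2,x_2^2)$ and $(y_1^2,y_1y_2,y_2^2)$, so $\bL=\bM=\emptyset$. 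Having no points, they are not degenerate in the sense of Definition \ref{determ}, although $A$ is non-concise. Only $\bN$, the cone $z_1z_3=z_2^2$ with vertex $(0,0,0,1)$, is degenerate.

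The mechanism is general. If $a_{ijr}=0$ for all $i,j$, then $(P,Q,(0,\dots,0,1))\in K_A$ as soon as ${}^t\!Q\,L(P)=0$, so $A$ is degenerate iff $\bL\neq\emptyset$. For $r\ge p+q$ the expected codimension of $\bL$ is $p$, so $\bL$ is degenerate iff it is nonempty. Hence every non-degenerate tensor with a zero $z$-slice and $r\ge p+q$ violates the statement. The paper itself exhibits such tensors in Section 4: the non-concise, non-degenerate $(2,2,r)$ tensors with $r\ge 5$, and Step 2 of Algorithm \ref{Alg2x2xr}. The generic tensor with $r>pq$ is another instance.

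\textbf{Your fallbacks.} None of them can work. $\bL\neq\emptyset$ is not forced. Definition \ref{determ} has no convention making an empty scheme degenerate, and here emptiness is exactly the expected behaviour. Passing to $\bM$ or $\bN$ cannot rescue a statement that asserts all three schemes are degenerate, and $\bM$ is empty as well.

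\textbf{The paper's proof.} It passes over this point with the sentence that a zero column makes the codimensions of $\bL$ and $\bM$ ``at least one less than expected''. By the bound of \cite{BV} you invoke, the codimension is at most $\min\{r-q,p\}$, against an expected $\min\{r-q+1,p\}$. That is a strict drop only when $r-q<p$, i.e.\ $r\le p+q-1$, which is exactly the range your argument handles.

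\textbf{What is true.} $\bN$ is always degenerate. $\bL$ and $\bM$ are degenerate whenever $A$ admits a $0$-slice in the $x$- or $y$-direction, or $r\le p+q-1$; your case analysis proves these two parts. In the remaining range they are degenerate if and only if $A$ is, which follows from the mechanism above.
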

\proof By Remark \ref{concise} we have that (modulo linear changes of the coordinates $x_i$, $y_j$ and $z_k$, i.e. modulo linear combinations on the slices of $A$) there is at least a 0-slice in one of the three directions. Say the 0-slice is the last one in the $r$ direction, i.e. $(a_{ijr}) = \mathbf{0}$. We have that in this case the associated matrices $L$ and $M$ have a column of zeroes, hence the codimensions of $\bL$ and $\bM$ will be at least one less then expected, hence they are degenerate.  In the matrix $N$, instead, the entries will be polynomials in $z_1,\ldots , z_{r-1}$, so the scheme $\bN$ is a cone and it has singular points (at least $(0,\ldots, 0,1)$), hence it is degenerate. 
\prfend

\subsection{Hyperdeterminant}

 We reformulate here the definition of hyperdeterminant for a tridimensional hypermatrix
(for the general definition, see \cite{GKZ}).

Let $p$, $q$, $r$ be integers, with $2\leq p \leq q \leq r$. Consider the variety $V_{p-1,q-1,r-1}$ given by the product $\P^{p-1} \times \P^{q-1} \times \P^{r-1}$ in its Segre embedding in $\P^m$, where $m=pqr-1$ (defined by the linear system given by forms of multidegree $(1,1,1)$). Let $\C[w_{ijk}] $ be the coordinate ring of $\P^m$.
\b We denote the dual variety of $V_{p-1,q-1,r-1}$ with $\mathbb{V}$; recall that $\mathbb{V} \subset (\P^m)^\ast$ is the projective closure of the set of all hyperplanes in $\P^m$ that are tangent to $V_{p-1,q-1,r-1}$ (i.e. that contain the tangent space at a point of $V_{p-1,q-1,r-1}$). 
\b Let $A = (a_{ijk})$ be a hypermatrix of format $(p, q, r)$. Then $H_A: \sum_{i,j,k} a_{ijk}w_{ijk} = 0$ is a hyperplane in $\P^m$, that is, a point in $(\P^m)^\ast$.

\begin{defn}  \rm In the previous notations assume that $p$, $q$, $r$ are such that $\mathbb{V}$ is a hypersurface of $(\P^m)^*$, with $I_{\mathbb{V}} =(D)$. Then
\b $i)$ $D$  is called the {\em hyperdeterminant of format }$(p, q, r)$;
\b $ ii)$  we write $Det(A) := D(H_A)$ (obviously the value of $Det(A)$ is defined only when it is zero).
 
 Notice that the hyperdeterminant is not defined for those formats for which codim$(\mathbb{V})>1$.
\end{defn}

As a corollary of \cite{GKZ}, Chapter 14, Theorem 1.3, we have the following:

\begin{prop}\label{format} A tridimensional hyperdeterminant of format $(p, q, r)$ ($2\leq p \leq q \leq r$), is defined if and only if
$$ r\leq p + q -1.$$
\end{prop}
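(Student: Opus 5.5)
The statement is a specialization of the Gelfand–Kapranov–Zelevinsky criterion, so the plan is to quote it and then check the inequalities. By \cite{GKZ}, Chapter 14, Theorem 1.3, for a format $(k_1+1,\dots,k_s+1)$ the dual variety of the Segre product $\P^{k_1}\times\dots\times\P^{k_s}$ is a hypersurface if and only if the ``polygon inequalities'' $k_j\le \sum_{i\ne j}k_i$ hold for every $j$. If they fail, it has higher codimension. In our case $s=3$ and $(k_1,k_2,k_3)=(p-1,q-1,r-1)$. The first step is to write out the three inequalities explicitly:
$$p-1\le (q-1)+(r-1),\qquad q-1\le (p-1)+(r-1),\qquad r-1\le (p-1)+(q-1).$$

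Next we use the ordering $2\le p\le q\le r$. The first inequality reads $p\le q+r-1$, which holds because $p\le q$ and $r\ge 1$. The second reads $q\le p+r-1$, which holds because $q\le r$ and $p\ge 1$. So the only condition that can actually fail is the third, $r-1\le p+q-2$, which is exactly $r\le p+q-1$.

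Finally, recall the definition: $Det$ is defined precisely when $\mathbb{V}$ is a hypersurface of $(\P^m)^*$. Combining this with the cited theorem, the hyperdeterminant of format $(p,q,r)$ exists if and only if $r\le p+q-1$. This gives both directions at once, because the GKZ theorem is itself an equivalence.

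The main thing to watch is the indexing convention. GKZ work with projective dimensions $k_i$, so each $k_i$ is one less than the corresponding side of the format, and shifting by one is what turns $r-1\le p+q-2$ into $r\le p+q-1$. The boundary case $r=p+q-1$ is the ``boundary format'', and it is included. If one wanted to avoid citing GKZ for the converse direction, the fallback would be a dimension count on the incidence variety of pairs (degenerate $A$, kernel point). For $r>p+q-1$, system $(*)$ gives the expected count $pqr-1-(r-p-q+1)-1$, which yields codimension $r-p-q+2>1$. That argument is longer, so I would simply invoke the theorem.
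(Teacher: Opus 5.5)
Your proposal is correct and takes the same approach as the paper, which presents the statement directly as a corollary of \cite{GKZ}, Chapter 14, Theorem 1.3, with no further argument. Your explicit check that, under $2\le p\le q\le r$, only the inequality $r-1\le (p-1)+(q-1)$ can fail is exactly the specialization the paper leaves implicit.
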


\begin{defn} {\rm We say that a format $(p, q, r)$, with $2\leq p \leq q \leq r$, is $boundary$ if the inequality in Proposition \ref{format} is an equality. If the inequality holds strictly we say that the format is $interior$, and that it is $exterior$ if $ r > p + q -1.$}
\end{defn}

Always from \cite{GKZ}, we have:
\begin{prop}\label{degenHyperdet} A tridimensional hypermatrix of format $(p, q, r)$ ($2\leq p \leq q \leq r$), with $ r\leq p + q -1$ is degenerate if and only if $Det(A) = 0$.
\end{prop}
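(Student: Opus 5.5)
This statement is quoted from \cite{GKZ}. To prove it, I would identify the zero locus of the hyperdeterminant with the set of degenerate hypermatrices. Both of these can be described in terms of tangency to the Segre variety $V_{p-1,q-1,r-1}$. By Proposition \ref{format}, when $r\leq p+q-1$ the dual variety $\mathbb{V}$ is a hypersurface with ideal $(D)$. So $Det(A)=0$ exactly when the hyperplane $H_A$ lies in $\mathbb{V}$, where $\mathbb{V}$ is the closure of the set of hyperplanes tangent to $V_{p-1,q-1,r-1}$ at some point. It therefore suffices to prove two things: (a) $H_A$ is tangent to the Segre variety at the point $P\otimes Q\otimes T$ if and only if $(P,Q,T)\in K_A$; and (b) the set of degenerate hypermatrices is closed, so that taking the closure adds nothing.

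For (a), I would compute the affine tangent space of the Segre variety at $P\otimes Q\otimes T$. It is spanned by $\C^p\otimes Q\otimes T$, $P\otimes\C^q\otimes T$ and $P\otimes Q\otimes\C^r$. The hyperplane $H_A$ contains this span exactly when the linear form $w\mapsto\sum a_{ijk}w_{ijk}$ vanishes on all three subspaces. This says $f_A(X,Q,T)=0$ for all $X$, $f_A(P,Y,T)=0$ for all $Y$, and $f_A(P,Q,Z)=0$ for all $Z$, which is precisely the system $(*)$ of Definition \ref{kernel}. Note that $H_A$ then automatically contains the point itself, since $f_A(P,Q,T)=0$.

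For (b), I would consider the incidence variety
$$I=\{(A,(P,Q,T)) : (P,Q,T)\in K_A\}\subset \P^m\times\P^{p-1}\times\P^{q-1}\times\P^{r-1}.$$
It is closed, because the conditions $(*)$ are multihomogeneous equations. The degenerate locus is the image of $I$ under projection to $\P^m$, and this projection is proper since the product of projective spaces is complete. Hence the degenerate locus is closed. By (a) it equals the set of hyperplanes actually tangent at some point, so it equals its own closure $\mathbb{V}$. Combining this with $I_{\mathbb{V}}=(D)$ gives: $A$ is degenerate if and only if $H_A\in\mathbb{V}$, if and only if $Det(A)=0$.

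The main obstacle is the input from Proposition \ref{format}, namely that $\mathbb{V}$ is a hypersurface exactly when $r\leq p+q-1$. That is where the hypothesis on the format is really used. I would take it as given from \cite{GKZ}, Chapter 14, Theorem 1.3; proving it directly would require a dimension count on $I$, showing that its generic fibre over $\mathbb{V}$ is finite in that range. Everything else in the argument is formal.
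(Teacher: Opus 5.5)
Your argument is correct. The paper gives no proof of its own and simply quotes the result from \cite{GKZ}; your reconstruction is precisely the standard argument in that source: $H_A$ contains the tangent space to the Segre variety at $P\otimes Q\otimes T$ if and only if $(P,Q,T)\in K_A$, the degenerate locus is closed because the projection from the incidence variety is proper, and the only substantive input is that $\mathbb{V}$ is a hypersurface, which you correctly take from Proposition \ref{format}.
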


Since the condition $Det(A) = 0$ means that the point $H_A$ is on the hypersurface $\mathbb{V}$, \ref{degenHyperdet} implies that the generic tensor of format $(p, q, r)$ in the interior or boundary case is not degenerate.

Notice that a statement analogous to \ref{degenHyperdet} holds for hypermatrices of any dimensions and format whenever the hyperdeterminant is defined.

\bigskip

\section{ Schl\"{a}fli's approach and a generalization}

The approach to the study of  the hyperdeterminant of $A$ via the associated schemes is originally due to Ludwig Schl\"{a}fli (1814-1895, \cite{Sc}). Schl\"{a}fli found that the hyperdeterminant of a hypermatrix $A$ (of any dimension) divides the discriminant of the hyperdeterminant of one of its associated hypermatrices if such hyperdeterminant is defined (see \cite{GKZ}, Ch.14, Thm 4.1). In our case, since $A$ is trilinear, those are the matrices of linear forms $L, M, N$; hence in our case such hyperdeterminant is a determinant which exists only for square matrices,
i.e.  when we consider formats $(p,q,q)$ or $(p,p,r)$. In particular  we have (notations as above; see \cite{O}):

\begin{thm}\label{simpleformats} For hypermatrices $A$ of format $(2, q, q)$ and $(3, q, q)$ (in the second case we have $q\geq 3$)  $L$  is a square matrix, and  the discriminant of $\det(L)$ coincides with the hyperdeterminant of $A$, hence $A$ is degenerate if and only if the scheme $\bL$ is degenerate. 
\end{thm}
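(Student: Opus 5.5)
Since the formats $(2,q,q)$ and $(3,q,q)$ are boundary or interior ($q\le p+q-1$), Proposition \ref{degenHyperdet} says $A$ is degenerate iff $Det(A)=0$. So I would not work with the discriminant directly. Instead I would prove the equivalent geometric statement: $K_A\neq\emptyset$ if and only if $\bL$ has a degenerate point in the sense of Definition \ref{determ}. The identification of $Det(A)$ with the discriminant of $\det L$ then follows from Schläfli's divisibility (\cite{GKZ}, Ch.~14, Thm~4.1), which gives $Det(A)\mid \mathrm{Disc}(\det L)$, together with the irreducibility of $\mathbb V$ and a degree count, or an equality of zero loci, as in \cite{O}.

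Here $L$ is $q\times q$ with entries in $x_1,\dots,x_p$, so $\bL=\{\det L=0\}$. Its expected codimension is $\min\{1,p\}=1$, so $\bL$ is a hypersurface in $\P^{p-1}$, that is, a set of $q$ points when $p=2$ and a plane curve of degree $q$ when $p=3$. Since $u=v$, the Jacobian is the $1\times p$ gradient of $\det L$. A point $P$ is therefore degenerate iff $\nabla\det L(P)=0$, which, given the expected dimension, is the same as $P$ being singular.

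\emph{Degenerate $\Rightarrow$ singular point.} Take $(P,Q,T)\in K_A$. Conditions $(*1)$ and $(*3)$ of Remark \ref{matid} give ${}^tQ\,L(P)=0$ and $L(P)\,T=0$, because $MZ=NY$ and $(*3)$ at $Z=T$ translates into $L(P)T=0$ via $P_A={}^tY L Z$. So $P\in\bL$. By elementary operations (Remark \ref{elop}) and a change of $x$-coordinates, I normalize to $P=(1,0,\dots,0)$, $Q=e_1$, $T=e_1$. Then the first row and first column of $L(P)$ vanish. By Jacobi's formula, $\partial_k\det L(P)=\mathrm{tr}(\mathrm{adj}L(P)\,\partial_kL)$. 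When $\operatorname{rk}L(P)=q-1$, the adjugate is proportional to $T\,{}^tQ=e_1{}^te_1$, so $\partial_k\det L(P)$ is proportional to $a_{k11}$ (in the new coordinates). Condition $(*2)$ says exactly that ${}^tQ\,(\partial_kL)\,T=a_{k11}=0$ for all $k$, hence the gradient vanishes. If instead $\operatorname{rk}L(P)\le q-2$, then $P$ is bi-degenerate and $\mathrm{adj}L(P)=0$, so the gradient vanishes trivially.

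\emph{Singular point $\Rightarrow$ degenerate.} Let $P$ be singular on $\bL$. If $\operatorname{rk}L(P)\le q-2$, the kernels are large: left kernel vectors $Q$ form a $\ge2$-dimensional space $U$ and right kernel vectors $T$ a space $W$. I need $Q\in U$, $T\in W$ with ${}^tQ\,(\partial_kL)\,T=0$ for all $k\le p$. These are $p\le 3$ bilinear equations on $\P(U)\times\P(W)$, which has dimension $\ge2$. This suffices for $p=2$, but for $p=3$ the dimension count is tight. Using that the equation with $\partial_1L$ (along the direction of $P$) is automatic because ${}^tQ\,L(P)\,T=0$, only $p-1\le2$ conditions remain, so a solution exists. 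If $\operatorname{rk}L(P)=q-1$, then $\mathrm{adj}L(P)=T\,{}^tQ$, and vanishing of the gradient gives ${}^tQ\,\partial_kL\,T=0$ for all $k$, which is exactly $(*2)$. So $(P,Q,T)\in K_A$.

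The main obstacle is the rank-drop case, in particular the count for $p=3$, and making the comparison with the discriminant precise (irreducibility and degrees). For that I would rely on Proposition \ref{degenHyperdet} and cite \cite{O}, rather than recomputing degrees.
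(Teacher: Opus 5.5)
Your proof is correct, but it takes a genuinely different route from the paper. The paper gives no argument of its own: it recalls Schl\"afli's divisibility $Det(A)\mid \mathrm{Disc}(\det L)$ (\cite{GKZ}, Ch.~14, Thm.~4.1), quotes the equality from \cite{O}, and the ``hence'' part is then Proposition~\ref{degenHyperdet}.

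You instead prove the geometric equivalence directly, without using the hyperdeterminant: $K_A\neq\emptyset$ if and only if $\nabla\det L$ vanishes at some point of $\bL$.
\begin{itemize}
\item The adjugate/Jacobi argument settles the case $\operatorname{rk}L(P)=q-1$ in both directions. It is a slicker version of Propositions~\ref{A2P14} and \ref{A2Prop15} for square $L$.
\item The genuinely new input is the bi-degenerate case. The equation with $L(P)$ itself is automatic, so only $p-1\le 2$ bilinear forms must vanish on $\P(U)\times\P(W)$, which has dimension $\ge 2$, and such forms always have a common zero there.
\end{itemize}
This gives a self-contained proof of the degeneracy criterion. It also pinpoints where $p\le 3$ is used: exactly in the bi-degenerate case that Theorem~\ref{Theorem} must exclude in general. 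Your count breaks down with $4$ or $5$ variables, consistently with Examples~\ref{334} and \ref{335}. The cited route, by contrast, delivers the polynomial identity itself.

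Two small corrections.
\begin{itemize}
\item ``An equality of zero loci'' is not an alternative to the degree count. Together with irreducibility of $\mathbb V$ it only gives $\mathrm{Disc}(\det L)=c\,Det(A)^m$. What forces $m=1$ is the comparison of degrees: $pq(q-1)^{p-1}$ on both sides for $p=2,3$, i.e.\ $2q(q-1)$ and $3q(q-1)^2$, which you can check from the GKZ generating function. So the degree count is indispensable for the identity.
\item Your labels are swapped. $L(P)T=0$ is condition $(*2)$, because ${}^tX N={}^t(LZ)$. The condition ${}^tQ\,(\partial L/\partial x_i)\,T=0$ for all $i$ is $(*3)$, i.e.\ $M(Q)T=0$. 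The mathematics is unaffected.
\end{itemize}
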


\begin{remark}
For format $(2, 2, 3)$ the same is true for $\bN$, i.e. $Det(A)$ is the discriminant of the conic $\bN$ (see \cite{O}, Example 5.4).
\end{remark}

While Schl\"afli method uses only one associated scheme, our aim is to use all the three associated schemes to get informations about $A$, mainly about its degeneracy. For example it is known, when $A$ is of boundary format, that $A$ is degenerate if and only if the scheme $\bL$ (or, equivalently, the scheme $\bM$) possesses a degenerate point (again, see \cite{GKZ}); here we try to exploit the relation between degeneracy of $A$ and degenerate points of $\bL$, $\bM$ and $\bN$. 

The work in \cite{A1} and \cite{A2} is aimed exactly to this target, but unfortunately, as we noticed in the introduction, the main result there, namely Theorem 10 of \cite{A2} which affirms the equivalence of $A$ being degenerate (when  $r\leq p+q-1$) and the existence of a degenerate point in anyone of the three schemes $\bL$, $\bM$ and $\bN$ is wrong, as the following counterexample shows; hence a more subtle relation must be discovered.

\begin{exa}\label{335} Let $A$ be a generic $(3,3,5)$ hypermatrix; being generic $A$ is not degenerate, while the scheme $\bN\subset \P^4$ necessarily has bi-degenerate points, since the subscheme $\bN'\subset \bN\subset \P^4$ whose ideal is given by the $2\times 2$ minors of the $3\times 3$-matrix $N$ has codimension 4 (see \ref{sqmatrix}) hence it is not empty.
\end{exa}

\a The following Proposition \ref{A2P14} and Lemma \ref{A2L11} are stated in \cite{A2} (Proposition 14 and Lemma 11) in the interior and boundary cases. Here the idea of the proof is  the same, but we give a more compact proof which is valid for all formats.

\begin{prop}\label{A2P14} Let $A=(a_{ijk})$ be a tridimensional hypermatrix of format $(p,q,r)$ with $2\leq p\leq q \leq r$. If $A$ is degenerate, let $(P,Q,T)\in K_A$, then the three points $P,Q$ and $T$ are degenerate points of $\bL, \bM$ and $\bN$, respectivey.
\end{prop}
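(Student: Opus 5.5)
The plan is to move the kernel point to coordinate points and then read the rank of the Jacobian directly off formula $(\heartsuit)$ of Remark \ref{jacobfunct}. By the Lemma on changes of coordinates and Remark \ref{elop}, such changes do not affect degeneracy of $A$, membership in $K_A$, or the rank of the Jacobian of the associated schemes. So we may change coordinates independently in $\C^p,\C^q,\C^r$ and assume $P=(1,0,\dots,0)$, $Q=(1,0,\dots,0)$, $T=(1,0,\dots,0)$. The three kernel conditions of Definition \ref{kernel} then become
$$a_{11k}=0\ \ \forall k,\qquad a_{1j1}=0\ \ \forall j,\qquad a_{i11}=0\ \ \forall i .$$

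I treat $\bL$ in detail; $\bM$ and $\bN$ follow by the symmetric argument with the roles of the indices permuted. The matrix $L$ is $q\times r$ with entries $L_{jk}=\sum_i a_{ijk}x_i$. Its first row at $P$ is $(a_{11k})_k=0$, so $(\heartsuit)$ applies (rows indexed by $j$, variables $x_i$). It gives $\partial_1 L_\alpha(P)=0$, and for $i\ge 2$ the value $\partial_i L_\alpha(P)$ is the $q\times q$ determinant, on the columns in $\alpha$, of the matrix with first row $v_i:=(a_{i1k})_k$ and remaining rows $(a_{1jk})_k$, $j=2,\dots,q$.

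Now look at column $k=1$ of this matrix. Its entries are $a_{i11}=0$ and $a_{1j1}=0$, so the column is zero. Hence every $\alpha$ containing $1$ contributes a zero row of $J_\bL(P)$. We are left with the rows indexed by multi-indices $\alpha\subseteq\{2,\dots,r\}$ and the columns indexed by $i=2,\dots,p$. This already gives ${\rm rk}\,J_\bL(P)\le p-1$.

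For the second bound, let $W\subseteq\C^{r-1}$ be the span of the rows $(a_{1jk})_{k\ge2}$, $j=2,\dots,q$, so $\dim W\le q-1$. For $v\in\C^{r-1}$ consider the map $\Phi(v)=\bigl(\det[v;\,\text{rows of }W]_\alpha\bigr)_{\alpha}$. It is linear in $v$ and vanishes on $W$, since a determinant with a repeated or dependent row is zero. Thus $\Phi$ factors through $\C^{r-1}/W$, whose dimension $d$ satisfies $d\ge r-q$. The columns of $J_\bL(P)$ are the vectors $\Phi(v_i)$, so they lie in the image of $\Phi$. If $\dim W=q-1$ then $d=r-q$ and ${\rm rk}\,J_\bL(P)\le r-q$. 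If $\dim W<q-1$ then all the maximal minors vanish identically and the rank is $0$. Either way
$${\rm rk}\,J_\bL(P)\le\min\{r-q,\;p-1\}<\min\{r-q+1,\;p\}=\exp{\rm rk}\,J_\bL(P).$$
So $P$ is a degenerate point of $\bL$ by Definition \ref{determ}; it lies in $\bL$ by Proposition \ref{notemp}.

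The same computation for $M$ ($p\times r$ in the $y$'s, zero first row at $Q$ because $a_{1j1}$-type and $a_{11k}$ entries vanish) gives rank $\le\min\{r-p,q-1\}$. For $N$ ($p\times q$ in the $z$'s) it gives rank $\le\min\{q-p,r-1\}$. In each case the bound is strictly below the expected rank.

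The main obstacle is the bookkeeping in $(\heartsuit)$: one must check that the zero first row and the zero first column of the relevant slice occur simultaneously for each of the three matrices. Once that is settled, the factorization of $\Phi$ through the quotient by $W$ is the key step that yields the bound $r-q$ rather than merely the bound $p-1$.
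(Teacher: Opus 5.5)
Your proposal is correct and is essentially the paper's proof. It uses the same normalization $P=Q=T=(1,0,\dots,0)$, the same application of $(\heartsuit)$, and the same observation that the kernel conditions make the first column of each matrix whose maximal minors give $J_\bL(P)$ vanish.

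The differences are minor:
\begin{itemize}
\item Your map $\Phi$, which vanishes on $W$, is a direct linear-algebra proof of Lemma \ref{A2L11}(2). The paper derives that lemma from the height bound for determinantal ideals instead.
\item Your bound $\min\{r-q,p-1\}$ covers all formats at once. The paper treats the boundary/exterior case separately, noting that the mere existence of $P\in\bL$ suffices there.
\item Your bound also covers the trivial edge cases $r=q$ (for $\bL$) and $p=q$ (for $\bN$). There the lemma's hypothesis $m\le n$ fails, but all the relevant minors vanish anyway.
\end{itemize}
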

\proof  
We start with the scheme $\bL$. We want to check that $P\in \bL$ is degenerate, i.e. that $$rk\, J_\bL(P) <  \exp {\rm rk}\, J_\bS(P)=\min \{r-q+1, p\}.$$
If  $r-q+1 \geq p$ we are in the boundary and exterior cases, and we have to prove that $rk\,J_\bL(P) \leq p-1$, or, which is the same, that $\dim \bL\geq 0$ (the expected dimension of $\bL$ in this case is $-1$, i.e. it is expected to be empty). Hence the mere existence of $P$ in $\bL$ (see Proposition \ref{notemp}) makes it a degenerate point.
\b If $r-q+1 < p$, that is, if $r< p+q-1$, we are in the interior case and we have to prove that $rk\,J_\bL(P) \leq r-q$.

\b Let $(P,Q,T) \in K_A$, and choose coordinates such that  $P=(1,0,\ldots 0)$, $Q = (1,0,\ldots 0)$ and $T=(1,0,\ldots 0)$. With this choice, by  $(*1), (*2)$ and $(*3)$ we have :
$$ a_{11k}=0 \quad {\rm for} \; k=1,\ldots , r;  \qquad  a_{1j1}=0 \quad {\rm for} \; j=1,\ldots , q;  \qquad a_{i11}=0 \quad {\rm for} \; i=1,\ldots p \qquad \eqno{(\diamondsuit)}$$

Hence we are in the conditions to apply $(\heartsuit)$ in \ref{jacobfunct} and we get:
$$\partial_1 L_\alpha (P)=0, \qquad \partial_iL_{\alpha}(P) =  \left| \begin{array} {cccc} a_{i1k_1}&\ldots&a_{i1k_q}\cr a_{12k_1}&\ldots&a_{12k_q}\cr \vdots &\ldots &\vdots \cr a_{1qk_1}&\ldots&a_{1qk_q}  \end{array} \right| {\; \rm for} \: i=2,\dots, p.$$

It follows that the entries of the $i^{th}$ row of $J_\bL(P)$ are the $q\times q$ minors of the following matrix:

$$J_i = \left(\begin{array} {cccc} a_{i11}& a_{i12}&\ldots  a_{i1r}\cr a_{121}& a_{i22}&\ldots  a_{12r}\cr \vdots&\vdots&\ldots \vdots \cr a_{1q1}& a_{1q2}&\ldots   a_{1qr}\end{array} \right) = \left(\begin{array} {cccc} 0& a_{i12}&\ldots   a_{i1r}\cr 0& a_{122}&\ldots   a_{12r}\cr \vdots&\vdots&\ldots  \vdots \cr 0& a_{1q2}&\ldots  a_{1qr}\end{array} \right).$$

\a  The inequality $rk\,J_\bL(P) \leq r-q$ follows by applying point 2) of Lemma \ref{A2L11} below to the $q\times (r-1)$ matrices $\tilde J_i$ (where $\tilde I_i$ is $J_i$ with the first column of zeros deleted), whose $q\times q$ minors give the Jacobian matrix at $P$.

\a The role of $p$ and $q$ being symmetric, exactly the same discussion holds for $\bM$.

\dd In the case of the scheme $\bN$ we want to prove that $T$ is a degenerate point, i.e. that $rk\, J_\bN(T) <  \min \{q-p+1, r\}.$ Since by assumption $2\leq p \leq q \leq r$, we have $q-p+1< r$, hence we have to prove that $rk\, J_\bN(T)) \leq q-p$; this can be done exactly as we have done for $\bL$ in the interior case.
\prfend

\begin{lem}\label{A2L11} 
1) Let $D$ be the $m\times n$ matrix ($m\leq n$):
$$D = \left(\begin{array} {ccc} x_1 & \ldots & x_n\cr a_{21}&\ldots & a_{2n}\cr \vdots & \ldots & \vdots \cr a_{m1}&\ldots & a_{mn} \end{array} \right), $$
where the $a_{ji}'s \in \C$ and the $x_i's$ are indeterminates. Then the number of linearly independent maximal minors of $X$ is $\leq n-m+1$.

\medskip
2) Let $D^{(1)}, \ldots , D^{(t)}$ be the $m\times n$-matrices with entries in $\C$ obtained setting $x_1=c_1^{(\ell)}, \dots, x_n=c_n^{(\ell)}$ in $D$:
$$D^{(\ell)} = \left(\begin{array} {ccc} c_1^{(\ell)} & \ldots & c_n^{(\ell)}\cr a_{21}&\ldots & a_{2n}\cr \vdots & \ldots & \vdots \cr a_{m1}&\ldots & a_{mn} \end{array} \right), \quad \ell =1,\ldots t.$$

\medskip Let $\alpha = (k_1,\ldots,k_m) \in \N^m$, with $1\leq k_1<k_2<\ldots < k_m\leq n$ and $B_{\ell \alpha}$ the $m\times m$-minor of $D^{(\ell)}$ given by the columns $k_1,\ldots , k_m$. Let  $B$ the $t\times {n \choose m}$-matrix whose entries are the $B_{\ell \alpha}$ for $\ell =1,\ldots,t$ and $\alpha$ as above.  Then $rk (B) \leq n-m+1$. 
\end{lem}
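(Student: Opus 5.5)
Part 1) first. Expand each maximal minor of $D$ along its first row: for a multi-index $\alpha=(k_1,\dots,k_m)$ we get $D_\alpha=\sum_{s=1}^m (-1)^{s+1}x_{k_s}\,\Delta_{\alpha\setminus k_s}$, where $\Delta_\beta$ is the $(m-1)\times(m-1)$ minor of the constant submatrix $D'$ (rows $2,\dots,m$) on the columns $\beta$. Each $D_\alpha$ is thus a linear form in $x_1,\dots,x_n$. Equivalently, $D_\alpha(c)$ is a linear functional of the vector $c=(c_1,\dots,c_n)$ placed in the first row. I will show that the span $W$ of these functionals in $(\C^n)^*$ has dimension at most $n-m+1$. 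This gives 1), reading "maximal minors of $X$" as those of $D$.

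The key observation is that every functional $c\mapsto D_\alpha(c)$ vanishes on the row space $U\subseteq\C^n$ of $D'$. If $c$ is a linear combination of the rows $2,\dots,m$, then the matrix with first row $c$ has dependent rows, so all its maximal minors vanish. Hence $W\subseteq U^\perp$, and $\dim U^\perp=n-\operatorname{rk}D'\le n$. If $\operatorname{rk}D'=m-1$, then $\dim U^\perp=n-m+1$ and we are done. If $\operatorname{rk}D'<m-1$, all the $\Delta_\beta$ vanish, so every $D_\alpha\equiv 0$ and $\dim W=0$.

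Part 2) then follows formally. The matrix $B$ has rows $\ell=1,\dots,t$ with entries $B_{\ell\alpha}=D_\alpha(c^{(\ell)})$. Consider the linear map $\Phi:\C^n\to\C^{\binom nm}$, $c\mapsto(D_\alpha(c))_\alpha$. By part 1) its image has dimension $\dim W\le n-m+1$. Every row of $B$ is $\Phi(c^{(\ell)})$ and so lies in this image, hence $\operatorname{rk}B\le n-m+1$.

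The only step needing care is the vanishing on $U$ together with the degenerate case $\operatorname{rk}D'<m-1$; both are handled by the elementary determinant facts above, so I expect no real obstacle. One remark on the application in Proposition \ref{A2P14}: there we apply 2) to the $q\times(r-1)$ matrices $\tilde J_i$, which share the last $q-1$ rows and differ only in their first row. So $m=q$, $n=r-1$, and the bound is $r-q$, which is exactly the inequality $\operatorname{rk}J_\bL(P)\le r-q$ required there.
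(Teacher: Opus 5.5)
Your proof is correct. Part 2) is the paper's argument in dual form, and part 1) takes a genuinely different route.

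In part 1) the paper simply invokes the classical height bound for the ideal of maximal minors of an $m\times n$ matrix, namely $\mathrm{ht}\le n-m+1$. It then uses the fact that for an ideal generated by linear forms the height equals the dimension of their span. You avoid commutative algebra altogether. You view each $D_\alpha$ as a linear functional of the first row and observe that all of them vanish on the row space $U$ of the constant block $D'$, so they lie in $U^\perp$. You then dispose of the case $\operatorname{rk}D'<m-1$ by the Laplace expansion.

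Your route gives slightly more than the statement. When $\operatorname{rk}D'=m-1$, any $c\notin U$ yields a rank-$m$ matrix, so the common zero set of the minors is exactly $U$. Hence $W=U^\perp$, and $\dim W$ is either $0$ or exactly $n-m+1$. This makes the paper's parenthetical remark precise: equality holds if and only if the last $m-1$ rows are independent, not merely for generic $a_{ji}$. It is also precisely the mechanism the paper later uses in the proof of Proposition~\ref{A2Prop15}. There, non-bi-degeneracy makes $L_2,\dots,L_q$ independent, and the vanishing of the maximal minors of $\Omega_\ell$ then forces its first row into their span. The paper's citation, by contrast, is a one-liner that fits the determinantal-scheme framework (expected codimension $v-u+1$) used throughout the paper.

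In part 2) the paper counts at least $\binom nm-n+m-1$ linear relations among the columns of $B$, inherited from the relations among the $D_\alpha$. You instead note that every row of $B$ lies in the image of $\Phi$, whose dimension is $\dim W$. These are the kernel side and the image side of the same rank count.
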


\proof 
1) It is well known that the ideal generated by the $m\times m$-minors of $D$ has height $\leq n-m+1$ (actually there is an equality if the $a_{ji}$'s are generic); since such minors are linear forms, the statement follows.

\medskip
2) By 1) we know that there are at least $h:={n \choose m}-n+m-1$ linearly independent relations among the maximal minors of $D$, hence the same holds for each $D^{(\ell)}$ (it suffices to assign the values $c_i^{(\ell)}$ to the $x_i$'s), and these relations do not depend on $\ell$.  Hence there are at least $h$ linear relations among the elements of each row of $B$, not depending on the row index $\ell$, which means that there are at least $h$ independent relations among the columns of $B$, i.e.  $rk(B)\leq n-m+1$.
\prfend

In trying to reverse the previous proposition, i.e. to deduce degeneracy of $A$ from the existence of degenerate points in the associated schemes, we have to be quite careful to avoid bi-degenerate points; namely we have:

\begin{prop}\label{A2Prop15} Let $A=(a_{ijk})$ be a tridimensional hypermatrix of format $(p,q,r)$ with $2\leq p\leq q \leq r$. If there is a degenerate point which is not a bi-degenerate point in one of the three schemes $\bL$, $\bM$ or $\bN$, then there is a point $(P,Q,T)\in K_A$,  i.e. $A$ is degenerate (so, if $r\leq p+q-1$, then $Det(A) = 0$).
\end{prop}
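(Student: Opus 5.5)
The plan is to treat $\bL$ first and get the other two schemes by symmetry. The case of $\bN$ is the same argument with the roles of $(L,x,q,r)$ played by $(N,z,p,q)$, and $\bM$ is literally symmetric to $\bL$. So let $P\in\bL$ be degenerate but not bi-degenerate. Since $P\in\bL$, the matrix $L(P)$ has rank $<q$. Since $P$ is not bi-degenerate, some $(q-1)\times(q-1)$ minor of $L(P)$ is nonzero, so ${\rm rk}\,L(P)=q-1$ exactly. Hence the left kernel of $L(P)$ is spanned by a unique (up to scalar) vector $Q$, with ${}^t\!Q\,L(P)=0$. The right kernel $W:=\{Z : L(P)Z=0\}$ has dimension exactly $r-q+1$. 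By Remark \ref{matid}, ${}^t\!Q\,L(P)=0$ is $(*1)$ at $(P,Q)$, and the identity ${}^t\!P\,N(Z)={}^t(L(P)Z)$ shows that $(*2)$ at $(P,T)$ is equivalent to $T\in W$. It then remains to find a nonzero $T\in W$ with $M(Q)T=0$, which is $(*3)$. Then $(P,Q,T)\in K_A$.

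The key step is to compute $J_\bL(P)$ explicitly at a corank-one point. Using Remark \ref{elop} (row and column operations on $L$, i.e.\ changes of $y$ and $z$ coordinates, which do not affect the $x$'s or the scheme $\bL$), I would normalize so that $Q=(1,0,\dots,0)$ and $W=\langle \zeta_1,\dots,\zeta_{r-q+1}\rangle$. I would also arrange that the first row of $L(P)$ is zero and that its last $q-1$ rows restricted to the last $q-1$ columns form the identity. I would also move $P$ to $(1,0,\dots,0)$, so that $(\heartsuit)$ of Remark \ref{jacobfunct} applies.

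Formula $(\heartsuit)$ shows that $\partial_i L_\alpha(P)$ is the minor obtained by replacing the zero first row with the constants $a_{i1k}$. Because of the identity block, this minor vanishes unless $\alpha$ contains all the last $q-1$ columns plus one column $s\le r-q+1$. In that case it equals $\pm a_{i1s}=\pm{}^t\!Q\,L(\e_i)\,\zeta_s=\pm f_A(\e_i,Q,\zeta_s)$. So, up to signs and zero rows, $J_\bL(P)$ is the $p\times(r-q+1)$ matrix $\Phi=\bigl(f_A(\e_i,Q,\zeta_s)\bigr)$. This is the matrix of the linear map $W\to(\C^p)^*$, $T\mapsto f_A(\cdot,Q,T)$, which is $T\mapsto {}^t(M(Q)T)$.

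Next comes the conclusion. Degeneracy says ${\rm rk}\,\Phi<\min\{r-q+1,p\}\le r-q+1=\dim W$, so the map above has a nontrivial kernel. Any nonzero $T$ in that kernel gives $T\in W$ and $M(Q)T=0$, and we are done. The final clause then follows from Proposition \ref{degenHyperdet}.

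For $\bN$, the point $T$ with ${\rm rk}\,N(T)=p-1$ gives a unique $P$ with ${}^t\!P\,N(T)=0$, which is $(*2)$. The right kernel $\{Y: N(T)Y=0\}$ has dimension $q-p+1$, and membership in it is $(*3)$ via $M(Z)=$ the transpose relation $MZ=NY$. The Jacobian reduces to the map $Y\mapsto{}^t\!P\,L(\cdot)$ evaluated at $Y$, and a kernel vector $Q$ gives $(*1)$.

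I expect the main obstacle to be the Jacobian computation at a corank-one point. One must check carefully that the normalizations of $Q$, $W$, $P$ and the identity block can be made simultaneously compatible with $(\heartsuit)$, which needs the zero row. One must also check that the only nonzero minors are the ones listed, so that ${\rm rk}\,J_\bL(P)={\rm rk}\,\Phi$ exactly. The point is that this is where non-bi-degeneracy is used: without the invertible $(q-1)$-block, all first-order terms can vanish, as in Example \ref{335}.
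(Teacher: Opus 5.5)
Your proposal is correct and is essentially the paper's argument. Both reduce to finding a nonzero common solution of $L(P)Z=0$, $M(Q)Z=0$, and both extract it from the rank deficiency of $J_\bL(P)$ computed via $(\heartsuit)$. Your extra normalization $L(P)=\begin{pmatrix}0&0\\0&I_{q-1}\end{pmatrix}$ just turns the paper's row-span intersection count into the dual, explicit identity ${\rm rk}\,J_\bL(P)={\rm rk}\,\bigl(M(Q)|_{\ker L(P)}\bigr)$, which also removes the need for the paper's separate boundary/exterior case. The one slip is notational: in the $\bN$ paragraph the relevant map is $Y\mapsto {}^t\!P\,N(\cdot)\,Y={}^t Y\,L(P)$, not ``${}^t\!P\,L(\cdot)$ evaluated at $Y$'', and membership in $\ker N(T)$ gives $(*3)$ through $N(T)Q=M(Q)T$.
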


\proof 
Let us assume that $P=(1,0,\ldots, 0)\in \bL$ is a degenerate and not bi-degenerate point for $\bL$; we must show that there is a point $(P,Q,T)\in Ker A$. Since  $P=(1,0,\ldots ,0)$, we have $L_{jk}(P)= a_{1jk}$.

\b In the following $\Lambda $ denotes the set of all multi-indexes $\alpha= (k_1,\ldots,k_q)$, with $1\leq k_1< \ldots < k_q \leq r$, and we can number its elements: $\Lambda =\{ \alpha_1, \dots, \alpha_{r\choose q}   \}$.

Since $P\in \bL$  and it is not a bi-degenerate point, $rk\,L(P)=q-1$, so we can assume that the last $q-1$ rows of $L(P)$ are independent and that  the first row is null (modulo linear operations among the rows, which correspond to a change of coordinates in $\C^{r-1}$ which do not affect the coordinates of $P$, see Remark \ref{elop}), i.e. $$a_{11k}=0 \qquad {\rm for} \quad k=1,\dots,r. \qquad \eqno{(+)}$$
Hence the linear systems $^t\!Y\,L (P)=0 $ has a unique (projective) solution:  $Q=(1,0,\dots,0)\in \P^{p-1}$, and $Q\in \bM$ by \ref{notemp}.
Since $M=(M_{ik})=(\sum_{j=1}^q a_{ijk}y_j)$, taking $(+)$ into account we finally get
$$ 
L(P) = \left( \begin{array} {ccccc} 0&\ldots & 0\cr a_{121}&\ldots &a_{12r}\cr  \vdots  &\ldots &\vdots \cr a_{1q1}&\ldots &a_{1qr}\end{array} \right), \qquad M(Q) = \left( \begin{array} {ccccc} 0&\ldots & 0\cr a_{211}&\ldots &a_{21r}\cr  \vdots  &\ldots &\vdots \cr a_{p11}&\ldots &a_{p1r}\end{array} \right)
$$
If we prove that the system in $r$ variables $$
\left\{ \begin{array}{l}
L(P)\,Z=0 \\
\\
M(Q)\,Z=0 \end{array}\right. \qquad \eqno{(\ddagger)}
$$
 has rank $\leq r-1$, denoting by $T$ one of the (projective) solutions we finally get $^t\!Q\,L (P)= L(P)\,T=M(Q)\,T=0$, that is, $(P,Q,T) \in K_A$ (see \ref{matid}) and $A$ is degenerate, as required.
 \b If we are in the boundary or exterior case, i.e. if $r\geq p+q-1$, then there are at most $p-1+q-1$ not null equations in system $(\ddagger)$, so its rank is $\leq p+q-2\leq r-1$ and we are done (in this case we don't even need the assumption $P$ non bi-degenerate).
\b If we are in the interior case $r< p+q-1$, we proceed as follows.
Let us denote by $L_2,\dots, L_q$, respectively $M_2,\dots, M_p$ the last $q-1$, respectively $p-1$, rows in $L(P)$, respectively $M(Q)$; we want to prove that $dim \,( \langle L_2,\dots, L_q, M_2,\dots, M_p\rangle) \leq r-1$.
\b Since $P=(1,0,\ldots ,0)$ and $(+)$ holds,  we are in the conditions to apply $(\heartsuit)$ in \ref{jacobfunct}, so we have that the entries $\partial_iL_{\alpha}(P)$ of the Jacobian matrix $J_{\bL}(P)$ are 

 $$\partial_1 L_\alpha (P)=0, \qquad \partial_iL_{\alpha}(P) =  \left| \begin{array} {cccc} a_{i1k_1}&\ldots&a_{i1k_q}\cr a_{12k_1}&\ldots&a_{12k_q}\cr \vdots &\ldots &\vdots \cr a_{1qk_1}&\ldots&a_{1qk_q}  \end{array} \right| {\; \rm for} \quad i=2,\dots, p, \qquad \alpha \in \Lambda \eqno{(\clubsuit)}$$
and the first row of the $p\times {r\choose q}$-matrix $J_\bL(P)$ is made of zeros.

The point $P$ being degenerate for $\bL$, $rk\, J_\bL(P) <  \min \{r-q+1, p\}$. Since we are in the assumption $r< p+q-1$, we have $rk\,J_{\bL}(P)\leq r-q$, i.e. there are at least $p+q-r-1>0$ independent linear relations among its last $p-1$ rows:
$$ \sum_{i=2}^p \lambda_i^{(\ell)}\, \bigl(\partial_iL_{\alpha_1}(P),\dots, \partial_iL_{\alpha_{r\choose q}}(P)\bigr)=(0,\dots,0), \qquad \quad \; \ell = 1,\ldots , p+q-r-1$$
This and $(\clubsuit)$ give: 
$$
0 = \sum_{i=2}^p \lambda_i^{(\ell)}\partial_iL_{\alpha}(P) =  \left| \begin{array} {cccc} \sum_{i=2}^p \lambda_i^{(\ell)}a_{i1k_1}&\ldots &\sum_{i=2}^p \lambda_i^{(\ell)}a_{i1k_q}\cr  a_{12k_1}&\ldots & a_{12k_q}\cr \vdots &\ldots&\vdots\cr a_{1qk_1}&\ldots & a_{1qk_q} \end{array} \right|, \qquad \alpha \in \Lambda, \quad  \ell = 1,\ldots , p+q-r-1
$$
For any $\ell$ these are the $q\times q$ minors of the following $(q\times r)$-matrices $\Omega_\ell$:
$$
\Omega_\ell := \left( \begin{array} {cccc} \sum_{i=2}^p \lambda_i^{(\ell)}a_{i11}&\ldots &\sum_{i=2}^p \lambda_i^{(\ell)}a_{i1r}\cr  a_{121}&\ldots & a_{12r}\cr \vdots &\ldots&\vdots\cr a_{1q1}&\ldots & a_{1qr} \end{array} \right)
$$
hence $\Omega_\ell$ does not have maximal rank for any $\ell = 1,\ldots , p+q-r-1$. 

The last $q-1$ rows of each $\Omega_\ell$ are exactly $L_2,\dots, L_q$, which are linearly independent since $P$ is not bi-degenerate, so we are saying that for $\ell = 1,\ldots , p+q-r-1$ 
$$ \lambda_2^{(\ell)}(a_{211}, \dots,a_{21r} )+ \dots +  \lambda_p^{(\ell)}(a_{p11}, \dots,a_{p1r} ) \in \langle L_2,\dots,L_q\rangle,$$
that is, that $p+q-r-1$ linear independent combinations of $M_2,\dots,M_p$ are in $\langle L_2,\dots,L_q\rangle$. Hence
$dim (\langle L_2,\dots,L_q\rangle \cap \langle M_2,\dots,M_p\rangle) \geq p+q-r-1$, that is,
$$dim (\langle L_2,\dots,L_q, M_2,\dots,M_p\rangle) \leq (p-1+q-1)-(p+q-r-1)=r-1.$$

The role of $p$ and $q$ being symmetric, an analogous discussion shows that if there is a degenerate and non bi-degenerate point $Q\in \bM$, then $A$ is degenerate. 

\dd Now suppose there is a degenerate but not bi-degenerate point $T\in \bN$: we have to prove that there are $P\in \bL$ and $Q\in \bM$ such that $(P,Q,T)\in K_A$. This proof follows the lines of the proof for $\bL$, interior case, but we write it down for the reader's convenience.
\b Since by assumption $2\leq p\leq q\leq r$, we have $q-p+1 <r$, hence the assumption $T$ degenerate for $\bN$ means  $rk\, J_\bN(T) <  \min \{q-p+1, r\}=q-p+1$.
\b We can assume $T=(1,0,\ldots,0)$, so $N_{ij}(T)=a_{ij1}$ and we can also assume that $a_{1j1}=0$ for $j=1,\ldots,q$ since $rk N(T)= p-1$.

Again since $rk N(T)= p-1$ the system $ ^t\!X\,N(T)=0$ has exactly one (projective) solution $P=(1,0\dots,0)$; since $L=(L_{ij})=(\sum_{i=1}^p a_{ijk}x_i)$ and $a_{1j1}=0$ we finally get
$$ 
L(P) = \left( \begin{array} {ccccc} 0&a_{112}&\ldots & a_{11r}\cr   \vdots  &\vdots & &\vdots \cr 0&a_{1q2}&\ldots &a_{1qr}\end{array} \right), \qquad 
N(T) = \left( \begin{array} {ccccc} 0&\ldots & 0\cr a_{211}&\ldots &a_{2q1}\cr  \vdots  &\ldots &\vdots \cr a_{p11}&\ldots &a_{pq1}\end{array} \right)
$$

If we prove that the system in $q$ variables  $$
\left\{ \begin{array}{l}
^t\!Y\,L(P)=0 \\
\\
N(T)\,Y=0 \end{array}\right. \qquad \eqno{(\ddagger)}
$$
has rank $\leq q-1$, then it has at least a solution, say $Q$, and we finally get $^t\!P\,N(T)= \,^t\!Q\,L(P)= N(T)\,Q=0$, that is (see \ref{matid}) $(P,Q,T) \in K_A$ and $A$ is degenerate.
\b If we denote by $C_2,\dots, C_r$, respectively $N_2,\dots, N_p$ the last $q-1$ columns in $L(P)$, respectively $p-1$ rows in $M(Q)$, we want hence to prove that $dim \,( \langle C_2,\dots, C_r, N_2,\dots, N_p\rangle) \leq q-1$.
\b \b In the following $\Gamma $ denotes the set of all multi-indexes $\g= (j_1,\ldots,j_p)$, with $1\leq j_1< \ldots < j_p \leq q$, and we can number its elements: $\Gamma =\{ \g_1, \dots, \g_{q\choose p}   \}$.

Since $T=(1,0,\ldots ,0)$ and $a_{1j1}=0$ for $j=1,\ldots,q$,  we are in the conditions to apply $(\heartsuit)$ in \ref{jacobfunct}, so we have that entries $\partial_kN_{\g}(P)$ of the Jacobian matrix $J_{\bN}(T)$ are 
$$\partial_1 N_\g (T)=0, \qquad  \partial_k N_\g (T)=\left|\begin{array} {ccc} a_{1j_1k} & \ldots & a_{1j_pk}\cr  a_{2j_11} & \ldots & a_{2j_p1}\cr & \ldots &  \cr a_{pj_11}&\ldots & a_{pj_p1} \end{array} \right| {\; \rm for} \: k=2,\dots, r, \quad \g \in \Gamma$$
Hence the first row of the $r\times {q\choose p}$-matrix $ J_\bN(T)$ is made of zeros, and since $rk\, J_\bN(T) \leq q-p$ there are at least $r-1-q+p>0$ independent linear relations among its last $r-1$ rows:
$$ \sum_{k=2}^r \mu_k^{(\ell)}\, \bigl(\partial_kN_{\g_1}(T),\dots, \partial_kN_{\g_{q\choose p}}(T)\bigr)=(0,\dots,0), \qquad \quad \; \ell = 1,\ldots , r+p-q-1.$$
Hence: 
$$
0 = \sum_{k=2}^r \m_k^{(\ell)}\partial_kN_{\gamma}(T) =  \left| \begin{array} {cccc} \sum_{k=2}^r \m_k^{(\ell)}a_{1j_1k}&\ldots &\sum_{k=2}^r \m_k^{(\ell)}a_{1j_pk}\cr  a_{2j_11} &  \ldots &a_{2j_p1}\cr \vdots&  &\vdots \cr   a_{pj_11}  &\ldots &a_{pj_p1} \end{array} \right|, \qquad \g \in \Gamma, \quad  \ell = 1,\ldots , r+p-q-1.
$$
So we get that for $\ell = 1,\ldots , r+p-q-1$ the following matrices do not have maximal rank:
$$
\Theta_\ell = \left( \begin{array}{cccc}\sum_{k=2}^r \m_k^{(\ell)}a_{11k}  & \ldots & \sum_{k=2}^r \m_k^{(\ell)}a_{1qk}\cr a_{211}  &\ldots &a_{2q1}\cr \vdots& &\vdots \cr   a_{p11}  &\ldots &a_{pq1}\end{array} \right).
$$
The last $p-1$ rows of each $\Theta_\ell$ are exactly $N_2,\dots, N_p$, which are linearly independent since $T$ is not bi-degenerate, so we are saying that for $\ell = 1,\ldots , r+p-q-1$ 
$$ \m_2^{(\ell)}(a_{112}, \dots,a_{1q2} )+ \dots +  \m_r^{(\ell)}(a_{11r}, \dots,a_{1qr} ) \in \langle N_2,\dots,N_p\rangle,$$
that is, that $r+p-q-1$ linear independent combinations of $C_2,\dots,C_r$ are in $\langle L_2,\dots,L_q\rangle$. Hence
$$dim (\langle C_2,\dots,C_r, N_2,\dots,N_p\rangle) \leq q-1.$$
\prfend 

\begin{remark} \rm Notice that of course there is no reason why a degenerate hypermatrix's kernel should be made only by degenerate but non-bidegenerate points. For example one (or even all) of its associated schemes could possess only bi-degenerate points; consider a $(3,3,3)-$hypermatrix where $a_{111}\neq 0$ and all the other entries $a_{ijk}=0$ (hence with $Trk\, A = 1$):  here $\bL \cong \bM \cong \bN \cong \P^2$ and all their points are bi-degenerate.
\end{remark}

We can summarize the results seen in the previous two propositions and in their proofs in the following:

\begin{thm}\label{Theorem}  Let $A=(a_{ijk})$ be a tridimensional hypermatrix of format $(p,q,r)$ with $2\leq p\leq q \leq r$, then:
\begin{itemize}
\item If  there exists $ (P,Q,T) \in K_A$, then $P, Q, T$ are degenerate points for $\bL, \bM, \bN$, respectively.
\item If $\bN$ possesses a degenerate but non bi-degenerate point, then $A$ is degenerate.
\item If $A$ is not interior (i.e. $r \geq p+q-1$), then $A$ is degenerate $\iff$ $\bL$ is degenerate $\iff $ $\bM$ is degenerate.
\item If $r < p+q-1$ and there is a degenerate non bi-degenerate point in $\bL$ or in  $\bM$ then $A$ is degenerate.

\end{itemize}
\end{thm}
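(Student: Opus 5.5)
The theorem is a summary, so I would prove it item by item by pointing to Proposition \ref{A2P14}, Proposition \ref{A2Prop15}, Proposition \ref{notemp} and to the steps inside their proofs.

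The first item is exactly Proposition \ref{A2P14}: for $(P,Q,T)\in K_A$ that proof shows $rk\,J_\bL(P)$, $rk\,J_\bM(Q)$ and $rk\,J_\bN(T)$ are below the expected rank. In the boundary and exterior cases for $\bL$ and $\bM$ this follows because $P\in\bL$ and $Q\in\bM$ exist. Otherwise it follows from formula $(\heartsuit)$ and Lemma \ref{A2L11}. The second item is the $\bN$ part of Proposition \ref{A2Prop15}. There $q-p+1<r$ always holds, so no case distinction on the format is needed. The fourth item is the $\bL$/$\bM$ part of the same proposition, interior case.

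The third item carries the real content. Assume $r\ge p+q-1$. The implication ``$A$ degenerate $\Rightarrow$ $\bL$ and $\bM$ degenerate'' is the first item. For the converse, I note that in this range $\min\{r-q+1,p\}=p$. So the expected codimension of $\bL$ in $\P^{p-1}$ is $p$, meaning $\bL$ is expected to be empty, and every point of $\bL$ is automatically degenerate. Hence ``$\bL$ degenerate'' is equivalent to $\bL\neq\emptyset$. By Proposition \ref{notemp} i), $\bL\neq\emptyset$ is equivalent to $\bM\neq\emptyset$, and $\bM$ is likewise expected empty because $r-p+1\ge q$. This gives $\bL$ degenerate $\iff$ $\bM$ degenerate.

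It remains to show that a point $P\in\bL$ yields a kernel element without the non-bi-degeneracy hypothesis. I would reuse the boundary/exterior step in the proof of Proposition \ref{A2Prop15}. Choose any $Q$ with ${}^t\!Q\,L(P)=0$; then ${}^t\!P\,M(Q)=0$ by Remark \ref{matid}. After a change of coordinates, $P$, $Q$ and a null row of $L(P)$ and of $M(Q)$ are normalized as in that proof. Then the system $L(P)Z=0$, $M(Q)Z=0$ has at most $(q-1)+(p-1)\le r-1$ nonzero equations in $r$ unknowns, so it has a projective solution $T$, and $(P,Q,T)\in K_A$.

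The only delicate point is that normalization when $rk\,L(P)<q-1$. In that case I choose $Q$ to be one particular kernel vector and apply elementary row operations to make the first row of $L(P)$ zero; this is allowed by Remark \ref{elop}. I then check that the first row of $M(Q)$ also vanishes, which holds by the identity ${}^t\!Q\,L={}^t\!X\,M$ evaluated at $P$. This is the step I would verify with most care, since the rest of the argument is bookkeeping.
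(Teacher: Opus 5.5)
Your proposal is correct and follows the paper's route. The theorem is a collation of Propositions \ref{A2P14} and \ref{A2Prop15}, whose boundary/exterior step needs no non-bi-degeneracy, as the paper itself notes. It also uses the observation of Remark \ref{2.23} that for $r\ge p+q-1$ ``degenerate'' means ``non-empty'' for $\bL$ and $\bM$, and these two are linked by Proposition \ref{notemp}.

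The normalization you single out as delicate is in fact unnecessary. For any $Q\neq 0$ with ${}^t\!Q\,L(P)=0$ you have $rk\,L(P)\le q-1$, and since ${}^t\!P\,M(Q)={}^t\!Q\,L(P)=0$, also $rk\,M(Q)\le p-1$. So the stacked system $L(P)Z=0$, $M(Q)Z=0$ has rank at most $p+q-2\le r-1$ directly.
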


\begin{remark}\label{2.23} If $r \geq p+q-1$, to say that $\bL$ is degenerate means that $\bL \neq \emptyset$ (see beginning of the proof of \ref{A2P14}). Hence Theorem \ref{Theorem} says in particular that, when $r \geq p+q-1$, $A$ is degenerate iff $\bL \neq \emptyset$ (and the same is true for $\bM$; see also \cite{O} 6.1, \cite{GKZ} XIV.3.1).

\b Let $A$ be a $(p,q,r)$ hypermatrix with $F_{ess}(A) = (p,q,r')$ and $p+q \leq r' < r$; let $A'$ be a hypermatrix with $r-r'$ 0-slices in the $z$ direction, obtained from $A$ via a linear change in the $z$-coordinates. Then, $A'$ is non-degenerate iff $A$ is non-degenerate. In fact, denoting by $\bL',\bM', \bN'$ the schemes associated to $A'$, we have that $\bL$ is projectively equivalent to $\bL'$, hence $\bL \neq \emptyset$ iff $\bL' \neq \emptyset$.
\end{remark}

\begin{exa}\label{334}  We want to check that asking that there is a degenerate and non bidegenerate point in $\bN$ is an essential requirement in Theorem \ref{Theorem}, not only when there are bi-degenerate points even for generic tensors, as in the Example \ref{335}. Let us consider a hypermatrix $A$, with $(p,q,r) = (3,3,4)$, given by the following $z$-slices:

$$A_{ij1}= \left( \begin{array} {ccc} 1&1&1\cr 0&1&1\cr 1&0&2 \end{array}\right) \quad A_{ij2}= \left( \begin{array} {ccc} 0&1&2\cr 1&0&1\cr 0&1&1 \end{array}\right)  \quad 
A_{ij3}= \left( \begin{array} {ccc} 1&1&2\cr 0&-1&0\cr 0&0&0 \end{array}\right) \quad A_{ij4}= \left( \begin{array} {ccc} 1&2&1\cr 1&1&1\cr 1&1&1 \end{array}\right)  $$

For this $A$, the scheme $\bN$ is defined by the determinant of a $3\times 3$ matrix $N$ of linear forms in $\C[z_1,z_2,z_3,z_4]$, namely:
$$
N = \left( \begin{array} {ccc} z_1+z_3+z_4&z_1+z_2+z_3+2z_4&z_1+2z_2+2z_3+z_4\cr z_2+z_4&z_1-z_3+z_4&z_1+z_2+z_4\cr z_1+z_4&z_2+z_4&2z_1+z_2+z_4 \end{array}\right)   $$
A simple computation (we used \cite{CoCoA}  for computations in Examples 3.10 and 3.11) shows that 

det $N = 2z_1^3 - 2z_1^2z_2 - 2z_1z_2^2 + z_2^3 - z_2z_3^2 + 3z_1^2z_4 - 6z_1z_2z_4 + z_2^2z_4 - 3z_1z_3z_4 + 3z_2z_3z_4 + z_3^2z_4 - z_2z_4^2$,

hence $\bN$ is the cubic surface in $\P^3$ defined by such equation. By computing the four partial derivatives, we get that the Jacobian ideal of $\bN$ is 

$J_\bN = (6z_1^2 - 4z_1z_2 - 2z_2^2 + 6z_1z_4 - 6z_2z_4 - 3z_3z_4,-2z_1^2 - 4z_1z_2 + 3z_2^2 - z_3^2 - 6z_1z_4 + 2z_2z_4 + 3z_3z_4 - z_4^2,-2z_2z_3 - 3z_1z_4 + 3z_2z_4 + 2z_3z_4,3z_1^2 - 6z_1z_2 + z_2^2 - 3z_1z_3 + 3z_2z_3 + z_3^2 - 2z_2z_4)$,

and the radical ideal of $J_\bN$ is $I = (z_3, z_2 + z_4, z_1 + z_4)$, hence $\bN$ possesses one (and only one) singular point $P=(1,1,0,-1)$, which is degenerate for $N$. The matrix $N(P)$ is 
$$
\left( \begin{array} {ccc} 0&0&2\cr 0&0&1\cr 0&0&2 \end{array}\right)   $$
whose rank is $=1$, hence $P$ is a bi-degenerate point, and we cannot conclude anything about $A$. Let us show that actually $A$ is not degenerate.

The matrices $L$ and $M$ are 
$$
L = \left( \begin{array} {cccc} x_1+x_2+x_3&x_2+2x_3&x_1+x_2+2x_3&x_1+2x_2+x_3\cr x_2+x_3&x_1+x_3&-x_2&x_1+x_2+x_3\cr x_1+2x_3&x_2+x_3&0&x_1+x_2+x_3 \end{array}\right) ,  $$
$$
M = \left( \begin{array} {cccc} y_1+y_3&y_2&y_1&y_1+y_2+y_3\cr y_1+y_2&y_1+y_3&y_1-y_2&2y_1+y_2+y_3\cr y_1+y_2+y_3&2y_1+y_2+y_3&2y_2&y_1+y_2+y_3 \end{array}\right).  $$

The ideal of $\bL$, given by the $3\times 3$ minors of $L$ is:
$
I_\bL = (-x_1^3 - x_1^2x_2 + x_1x_2^2 + 2x_2^3 - 5x_1^2x_3 - 2x_1x_2x_3 + 4x_2^2x_3 - 7x_1x_3^2 - 2x_3^3, -2x_1x_2x_3 + x_2^2x_3 + x_1x_3^2 + x_3^3, x_1^3 + x_1^2x_2 - x_1x_2^2 - 2x_2^3 + 4x_1^2x_3 + 3x_1x_2x_3 + 5x_1x_3^2 + 2x_2x_3^2 + 2x_3^3, -x_1^3 - x_1^2x_2 + x_1x_2^2 + 2x_2^3 - 3x_1^2x_3 - x_1x_2x_3 + 3x_2^2x_3 - 2x_1x_3^2 + x_2x_3^2).
$
This ideal is a radical one (by \cite{CoCoA}), and the Hilbert function of $R/I_\bL$ is $H(0) = 1, H(1) = 3, H(t) = 6   , t \geq 2$, i.e. $\bL$ is made of six distinct point, with no degenerate ones, hence, by Theorem 3.9, $A$ is not degenerate.

When considering the geometry of the situation, what happens here is that there are three points on a line in $\bL$: the ideal $J=(z)+I_\bL$ is such that the  Hilbert function of $R/J$ is: $H(0) = 1, H(1) = 2, H(t) = 3 $  for $t \geq 2 $. So the peculiarity of $\bL$ is not to have degenerate points, but three of them on a line. Since, as it is classically known, $\bN$ is the surface obtained via the linear system of the cubics in $\P^2$ passing through $\bL$, the line containing three points of $\bL$ get contracted to a point ($P$) which is singular on $\bN$ (the same happens for $\bM$).  In the generic case, with no three points on a line, $\bN$ is smooth and the double-six of lines on it is given by the exceptional lines of the two blow-ups of $\P^2$ on $\bL$ and on $\bM$.

\end{exa}

Let us consider now another example, where we can use Theorem \ref{Theorem}  to determine that a (3,3,4) tensor $A$ is degenerate:

\begin{exa} Let $A$ be given by the following $z$-slices:

		$$A_{ij1}= \left( \begin{array} {ccc} 1&0&0\cr 0&0&1\cr 1&0&0 \end{array}\right) \quad A_{ij2}= \left( \begin{array} {ccc} 0&0&1\cr 0&1&0\cr 0&0&0 \end{array}\right)  \quad 
A_{ij3}= \left( \begin{array} {ccc} 0&0&0\cr 0&0&0\cr 0&0&1 \end{array}\right) \quad A_{ij4}= \left( \begin{array} {ccc} 0&1&0\cr 1&0&0\cr 0&0&0\end{array}\right)  $$
		
The scheme $\bN$ is defined by the determinant of $N$: 
$$
N = \left( \begin{array} {ccc} z_1&z_4&z_2\cr z_4&z_2&z_1\cr z_1&0&z_3 \end{array}\right)   $$
Which gives:   det$(N) = -z_1z_2^2 + z_1z_2z_3 + z_1^2z_4 - z_3z_4^2$.    

 The  Jacobian Ideal of $\bN$ is  $J_\bN = (-z_2^2 + z_2z_3 + 2z_1z_4,-2z_1z_2 + z_1z_3,z_1z_2 - z_4^2,z_1^2 - 2z_3z_4)$ and its Hilbert function is: $H(0) = 1, H(1) = 4, H(2) = 6, H(3) = 4, H(t) = 2, t \geq 4$, while its radical ideal is $(-z_2^2 + z_2z_3, z_4, z_1)$ . Hence $\bN$ has two singular points: $(0,0,1,0)$ which is bidegenerate for $\bN$ and $(0,1,1,0)$ which is degenerate but not bidegenerate, hence $A$ is degenerate.     

The matrices $L$ and $M$ are 
$$
L = \left( \begin{array} {cccc} x_1&x_3&0&x_2\cr x_3&x_2&0&x_1\cr x_1&0&x_3&0 \end{array}\right) ,  $$
$$
M = \left( \begin{array} {cccc} y_1+y_3&0&0&y_2\cr 0&y_2&0&2y_1\cr y_2&y_1&y_3&0 \end{array}\right).  $$

The ideal $I_\bL$ is $(x_1x_2x_3 - x_3^3, -x_1x_2^2 + x_1^2x_3, -x_1^2x_3 + x_2x_3^2, x_2^2x_3 - x_1x_3^2])$, and the Hilbert function of $(R/I_\bL)$ is $H(0) = 1,H(1) = 3,H(t) = 6,, t\geq 2$, while its radical is $(x_1x_2 - x_3^2, x_2^2x_3 - x_1x_3^2, x_1^2x_3 - x_2x_3^2)$, whose Hilbert function is $H(0) = 1, H(1) = 3, H(t) = 5 , t \geq 2$, so $\bL$ is made of five points, with $(0,1,0)$ which is double and the others are simple; let us notice that $(0,1,0)$ is bidegenerate for $\bL$.

An analogous analysis for $\bM$ shows that it is made by five points too, with $(1,0,-1)$ double, but non bidegenerate this time.

\end{exa}

\section{Tensors of format $(2, 2, r)$.} 

This section is dedicated to the study of hypermatrices with this kind of format, whose invariants can be more easily known; our aim is to show how the analysis of the associated schemes allows to write algorithms which compute at the same time the tensor rank, the essential form and the degeneracy. 

The technique implied is the extended approach ``a la  Sh\"{a}fli" developed in the previous section, which implies to consider all the three associated schemes $\bL$, $\bM$, $\bN$; moreover we use the canonical forms for these hypermatrices in case $2\times 2 \times 2$ (see \cite{E}), from which canonical forms for the cases $2\times 2 \times r$ can easily be deduced.

\subsection{Tensors of format $(2, 2, 2)$} 
\b We consider here the simplest 3-dimensional case, i.e. the $(2,2,2)$ format (see also \cite{CGG1}, \cite{A1}, \cite{O}). We are in the interior case, hence the hypermatrix A is degenerate iff Det A=0. Recall that by theorem 3.8 we have that A is degenerate iff L is degenerate iff M is degenerate; moreover, N degenerate implies A degenerate. 
\b Since we always consider tensors and hypermatrices modulo moltiplication for a non-zero constant, we can view $A$ as an element of the space $\P^{7}$ parameterizing trihomogeneous polynomials in $\C[x_1,x_2,y_1,y_2,z_1,z_2]_{1,1,1}$, and the Segre Variety $V_{1,1,1}\subset \P^{7}$ as the one parameterizing decomposable tensors, i.e. tensors of rank 1. In this case we know by \cite{E} that with linear changes of coordinates  $A$ can be reduced to one of the following cases ({\it canonical forms}): 

\begin{figure}[H]
		%\centering
		\includegraphics[scale=0.5]{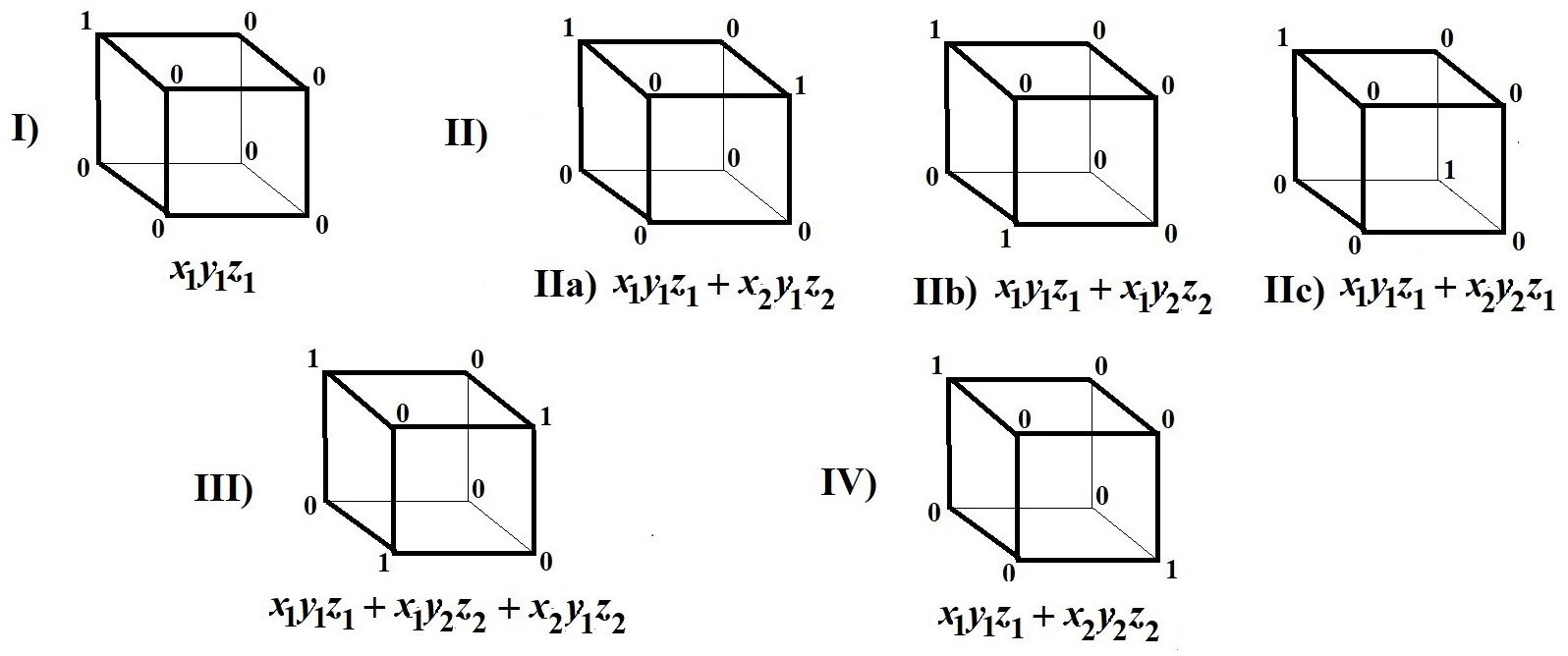}
			\end{figure}

When $A$ is of type I or II $A$ is not concise (hence $Det(A) = 0$), while in cases III and IV, we have concise hypermatrices. In case III again $Det(A)=0$, see \cite{CGG1}, while type IV is the generic one, with $Det(A)\neq 0$. Notice that for this format we have an explicit formula for the hyperdeterminant (see \cite{GKZ}); if $A = (a_{ijk})$, then:

$Det(A) = (a^2_{111}a^2_{222} + a^2_{112}a^2_{221} + a^2_{121}a^2_{212} + a^2_{122}a^2_{211})
- 2(a_{111}a_{112}a_{221}a_{222} + a_{111}a_{121}a_{212}a_{222} + a_{111}a_{122}a_{211}a_{222}+
a_{112}a_{121}a_{212}a_{221} + a_{112}a_{122}a_{221}a_{211} + a_{121}a_{122}a_{212}a_{211})+
4(a_{111}a_{122}a_{212}a_{221} + a_{112}a_{121}a_{211}a_{222}).$

In this case, the hyperdeterminant gives exactly the tangential variety of $V_{1,1,1}$: $\mathbb{V} = \tau(V_{1,1,1})$. Actually, since $\dim \tau(V_{1,1,1})= 2\dim V_{1,1,1} = 6$,  $\tau(V_{1,1,1})$ is a hypersurface. Moreover, we have that all the three schemes $\bL$, $\bM$ and $\bN$ are given by a quadratic equation in $\P^1$, these three quadrics have the same discriminant and the hyperdeterminant coincides with this discriminant (see \cite{O}).

When we look at the tensor rank of $A$, $Trk\, A$, we have that the generic tensor rank is 2, and the hypersurface $\mathbb{V}$ is the closure of the locus of the tensors with $non-generic$ rank (as remarked in \cite{CGG1}): the generic degenerate hypermatrix has $Trk\, A=3$ (which is the maximum value for $Trk\, A$, differently from what happens with ordinary 2-dimensional matrices, where generic and maximum rank coincide). Here the degenerate hypermatrices are either the ones with maximum tensor rank: $Trk\, A =3$ (third case), or the ones corresponding to decomposable tensors (first case: $A$ is corresponds to a point of $V_{1,1,1}$, which is the locus of  tensors with $Trk\, A= 1$), or the ones of type II), which are the ones corresponding to non-concise tensors with $Trk\, A=2$, for which $F_{ess}(A)$ is either $(1,2,2)$, $(2,1,2)$ or $(2,2,1)$.

In the generic case IV, by Theorem \ref{simpleformats}, we have that the three schemes $\bL$, $\bM$ and $\bN$ are all non-degenerate.  When we are in one of the degenerate cases, all the three schemes will be degenerate. More specifically (we will use coordinates $w_1,w_2$ when we show which kind of matrix we get, without specifying for which of the $x_i$'s, $y_j$'s or $z_k$'s):
\begin{itemize}
\item In case I the three matrices are $\left( \begin{array} {cc}  x_1&0\cr 0&0 \end{array}\right)$, $\left( \begin{array} {cc}  y_1&0\cr 0&0 \end{array}\right)$ and $\left( \begin{array} {cc}  z_1&0\cr 0&0 \end{array}\right)$, respectively, and all the three schemes coincide with $\P^1$.
\item In cases II two of the matrices  $L$, $M$, $N$ are of type
$\left( \begin{array} {cc}  w_1&0\cr w_2&0 \end{array}\right)$ or its transpose, and the third  of type $\left( \begin{array} {cc}  w_1&0\cr 0&w_1 \end{array}\right)$.  This means that two among the schemes $\bL$, $\bM$ and $\bN$ coincide with the whole $\P^1$, while the other is a double point, hence singular (that is degenerate) for the scheme and also bi-degenerate.

\item In case III the three matrices $L, M$ and $N$ are of type $\left( \begin{array} {cc}  w_1&w_2\cr 0&w_1 \end{array}\right)$, and all the three schemes are given by a double point.

\item in case IV the three matrices are of type $\left( \begin{array} {cc}  w_1&0\cr 0&w_2 \end{array}\right)$, hence the three schemes $\bL$, $\bM$ and $\bN$ are made of two simple points ($(1,0)$ and $(0,1)$) in $\P^1$.

\end{itemize}

\medskip
Since the behaviours described above are invariant for linear changes of coordinates, what we saw allows us to give a simple algorithm to determine the tensor rank, the conciseness and the degeneracy of a hypermatrix $A$ of format $(2,2,2)$. 

The problem reduces to the study of three binary quadratic forms, which is computationally quite easy to deal with. We use the same notations as above.

\begin{algorithm}[H]\caption{Tensor rank, hyperdeterminant and conciseness for $(2,2,2)$ tensors} \label{Alg2x2x2}
\a  \textbf{Input}: $A = (a_{ijk})$, $i,j,k \in \{1,2\}$ $a_{ijk} \in \C$.\\
 \a \textbf{Output}: State if $Det(A)=0$, if $A$ is concise or not, and compute $Trk$ and essential format of $A$ .

\a \begin{boxedminipage}{135mm}
    \begin{algorithmic}[1]

	\STATE\label{Alg1Step1} {\it STEP 1)} \  Compute $\det L\in \C[x_1,x_2]$:
	
	- If it has 2 distinct zeros, then $Det(A) \neq 0$, $A$ is concise and $Trk\, A=2$. {\bf STOP}
	
	- If $\det L\in \C[x_1,x_2]$ has a double root, or is 0, then $Det(A) =0$ and the schemes $\bL$, $\bM$ and $\bN$ are degenerate; go to the next step.
	
	   \STATE\label{Alg1Step2}{\it STEP 2)} \  Compute $\det M$ and $\det N$; then:	 
	   
	   - If all the three computed determinants have a double root, then $A$ is concise and $Trk\, A=3$. {\bf STOP}
	  
	 - If two among $\det L$, $\det M$, $\det N$,  vanish and the other has a double root, then $A$ is not concise, $Trk\, A=2$, and $F_{ess}(A)$ is either $(1,2,2)$, $(2,1,2)$ or $(2,2,1)$, depending on which among $\bL$, $\bM$ or $\bN$ is a double point. {\bf STOP}
	  
 	  - If the three computed determinants are all 0, then $Trk\, A= 1$, $A$ is non-concise and $F_{ess}(A)=(1,1,1)$. {\bf STOP}
	  
  \end{algorithmic}
\end{boxedminipage}
\end{algorithm}

\subsection{Tensors of format $(2, 2, 3)$}

\b This is what we called a boundary format, for which the hyperdeterminant is still defined.
A hypermatrix of this format appears as follows:

\begin{figure}[H]
		%\centering
		\includegraphics[scale=0.35]{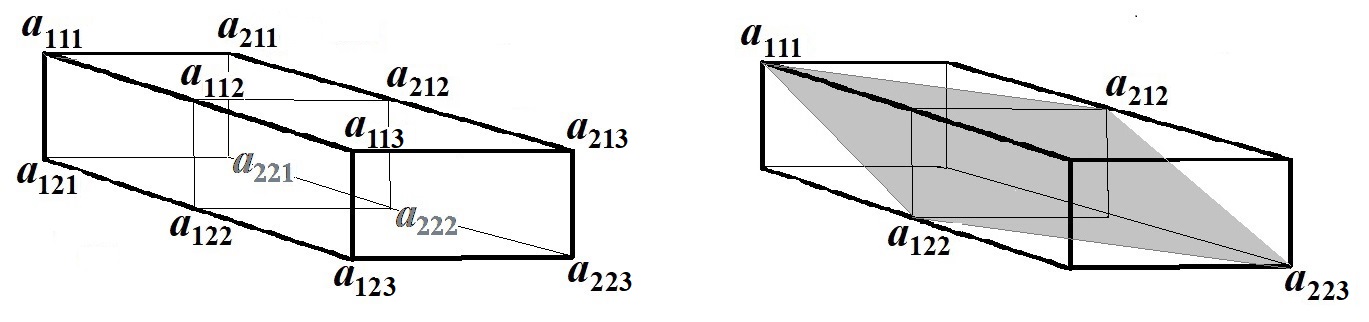}
			\end{figure}
			
Actually, it is not hard to check that with linear operations on its slices, $A$ can be reduced to what it is considered a {\it diagonal format}, as in the picture above: what is considered as the diagonal is evidentiated in grey (e.g. see \cite{O}). When $A$ is in diagonal form, we have $Det(A) = a_{111}^2a_{212}a_{122}a_{223}^2$ (\cite{O} 7.8).

All this leads us to consider the following cases, which are ``canonical forms", in the sense that every hypermatrix can be reduced to one of these via coordinate changes in the three spaces:

\begin{figure}[H]
		%\centering
		\includegraphics[scale=0.35]{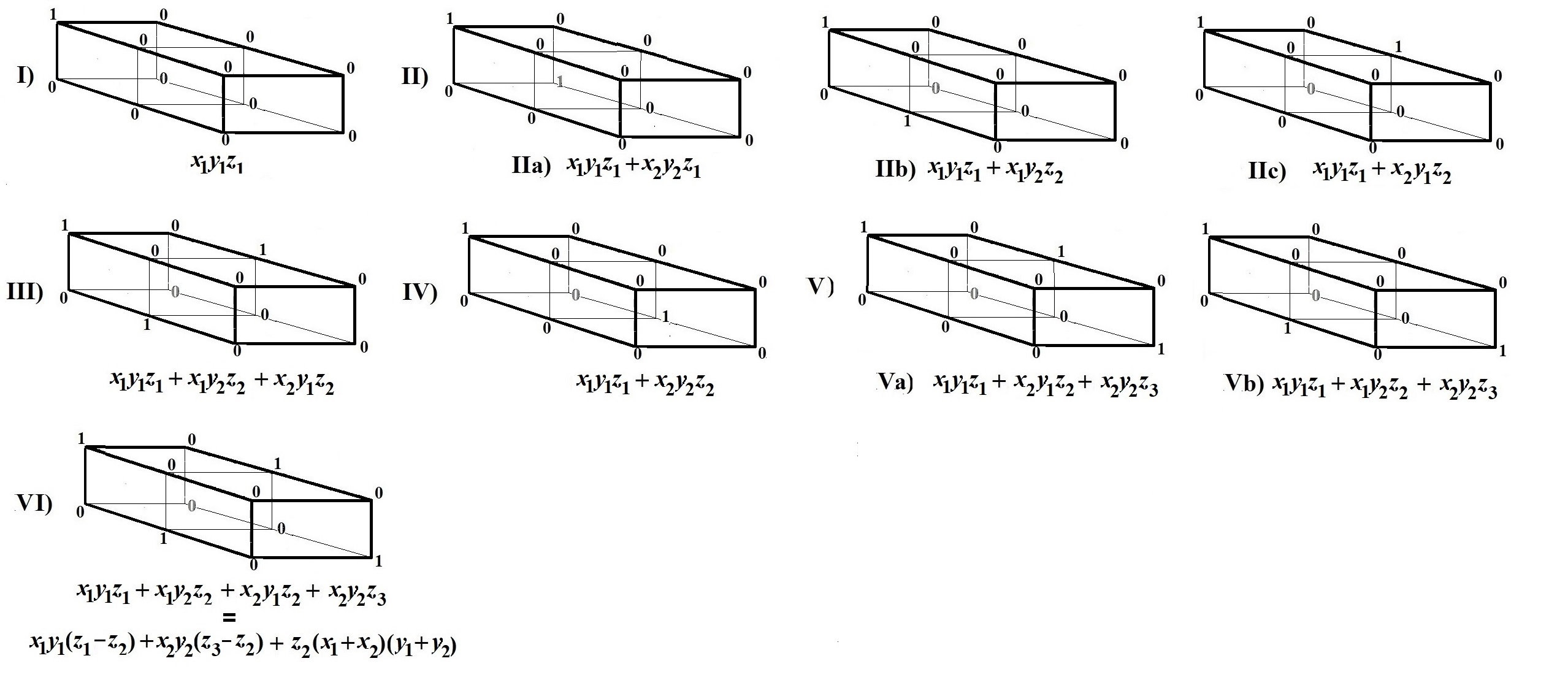}
			\end{figure} 
			
Cases I to IV are the non-concise ones and correspond to the cases we have already seen in the $(2,2,2)$ case. It is easy to check, using the cases (2,2,2) and linear operations among slices, that concise cases can be reduced to cases V or to case VI. Cases V are concise because no slice combination can produce a 0-slice; moreover, $Det(A) = 0$. The last case VI is the generic one (it is the diagonal format), non-degenerate and concise. The cases III, Va, Vb and VI represent tensors with $Trk\, A=3$, since if their rank were $\leq 2$, we shoud be in one of the cases I, II or IV, because with a linear change of coordinates only two monomials would appear. 

 Let us analyze the schemes $\bL$, $\bM$ and $\bN$ more in detail ($w_1$ and $w_2$ are used as in the case $(2,2,2)$):
\begin{itemize}
\item In case I we have that $N = \left( \begin{array} {cc}  z_1&0\cr 0&0 \end{array}\right)$, while $L$ and $M$ are of type $\left( \begin{array} {ccc}  w_1&0&0\cr 0&0&0 \end{array}\right)$, hence $\bN = \P^2$, while $\bL=\bM=\P^1$.
\item In case IIa the matrices  $L$ and $M$ will look like:
$\left( \begin{array} {ccc}  w_1&0&0\cr w_2&0&0\end{array}\right)$, while $N = \left( \begin{array} {cc}  z_1&0\cr 0&z_1 \end{array}\right)$.  This means that  $\bL=\bM=\P^1$ while $\bN$ is a double line in $ \P^2$. 
\item In cases IIb, IIc the matrix $N$ is either $\left( \begin{array} {cc}  z_1&0\cr z_2&0 \end{array}\right)$ or its transpose, while $L$ and $M$ are: $\left( \begin{array} {ccc}  x_1&x_2&0\cr 0&0&0 \end{array}\right)$, $\left( \begin{array} {ccc}  y_1&0&0\cr 0&y_1&0 \end{array}\right)$, hence  $\bN=\P^2$, while one between $\bL$ and $\bM$ is the whole $\P^1$ and the other is a double point.
\item In case III we have:  $L$ and $M$ like $\left( \begin{array} {ccc}  w_1&w_2&0\cr 0&w_1&0 \end{array}\right)$, while $N = \left( \begin{array} {cc}  z_1&z_2\cr z_2&0 \end{array}\right)$, hence $\bL$ and $\bM$ are given  both by a double point in $\P^1$, while $\bN$ is a double line in $\P^2$.
\item in case IV we have:  $L$ and $M$ are $\left( \begin{array} {ccc}  w_1&0&0\cr 0&w_2&0 \end{array}\right)$, while $N = \left( \begin{array} {cc}  z_1&0\cr 0&z_2 \end{array}\right)$; $\bL$ and $\bM$ are two simple points ($(1,0)\cup (0,1) \in \P^1$), while $\bN$ is made of two lines ($\{z_1=0\}\cup \{z_2=0\}$) in $\P^2$.
\item in case Va we have:  $L = \left( \begin{array} {ccc}  x_1&0&0\cr 0&x_1&x_2 \end{array}\right)$,  $M =\left( \begin{array} {ccc}  y_1&y_2&0\cr 0&0&y_2 \end{array}\right)$, $N = \left( \begin{array} {cc}  z_1&0\cr z_2&z_3 \end{array}\right)$; $\bL$ and $\bM$ are given by the ideals $(x_1^2,x_1x_2)$ and $(y_1y_2,y_2^2)$ and both are a simple point in $\P^1$ (here the determinantal ideal is not saturated, it is the degree 2 part of the ideal of the point); $\bN$ is made of two lines ($\{z_1=0\}\cup \{z_3=0\}$) in $\P^2$. The case Vb is analogous.
\item in case VI  $L$ and $M$ are $ \left( \begin{array} {ccc}  w_1&w_2&0\cr 0&w_1&w_2 \end{array}\right)$, $N = \left( \begin{array} {cc}  z_1&z_2\cr z_2&z_3 \end{array}\right)$; $\bL = \bM = \emptyset$, given by the irrelevant ideal $(w_1^2,w_1w_2,w_2^2)$, while $\bN$ is defined by $z_1z_3-z_2^2$, a smooth conic in $\P^2$.
\end{itemize} 

Recalling (see Definition \ref{determ}) that the expected codimension of $\bL$ and $\bM$ is 2, i.e. they are expected to be empty, while the expected codimension for $\bN$ is 1, and applying theorem \ref{Theorem}, we see that A is non degenerate only in case VI). Now we can give an algorithm for hypermatrices of this format:

\begin{algorithm}[H]\caption{Tensor rank, hyperdeterminant and essential format for $(2,2,3)$ tensors} \label{Alg2x2x3}
\a  \textbf{Input}: $A = (a_{ijk})$, $i,j \in \{1,2\}$, $k\in \{1,2,3\}$, $a_{ijk} \in \C$.\\
 \a \textbf{Output}: State if $Det(A)=0$, determine $F_{ess}(A)$, and compute $Trk\, A$.

\a \begin{boxedminipage}{135mm}
    \begin{algorithmic}[1]

	\STATE\label{Alg2Step1} {\it STEP 1)} \  Compute $\det N\in \C[z_1,z_2,z_3]$:
	
	- If it defines a smooth conic, then $Det(A) \neq 0$, $A$ is concise and $Trk\, A=3$. {\bf STOP}
	
	- If $\det N$ defines a singular conic or is 0, then the schemes $\bL$, $\bM$ and $\bN$ are degenerate and $Det(A) =0$; GO TO NEXT STEP.
	
	   \STATE\label{Alg2Step2}{\it STEP 2)} \  If $\det N$ defines a conic made by two distinct lines, compute the minors of $L$; then:	 
	   
	   - If the minors of $L$ define one point in $\P^1$, then $A$ is concise and $Trk\, A=3$. {\bf STOP}
		
		 - If the minors of $L$ define two points in $\P^1$, then $A$ is non-concise, $F_{ess} = (2,2,2)$ and $Trk\, A=2$. {\bf STOP}
	  
\medskip
	 If $\det N$ does not define a conic made by two distinct lines, $A$ is not concise. GO TO NEXT STEP.
	
	\STATE\label{Alg2Step3}{\it STEP 3)} \ If $\det N$ defines a conic made by a double line, compute the minors of $L$; then:
	
	- If the minors of $L$ define a double point in $\P^1$, then $F_{ess}(A) = (2,2,2)$ and $Trk\, A=3$. {\bf STOP}
		
		 - If the minors of $L$ vanish identically, i.e. $\bL=\P^1$, then  $F_{ess} = (2,2,1)$ and $Trk\, A=2$. {\bf STOP}

\medskip 
	 If $\det N$ does not define a conic made by a double line, GO TO NEXT STEP.
	
	 	 		\STATE\label{Alg2Step4}{\it STEP 4)} \ When $\det N$ vanishes identically, i.e. $\bN=\P^2$, compute the minors of $M$ and of $L$; then:	 
	   
	   - If one between $\bL$ and $\bM$ is a double point, then $F_{ess} = (1,2,2)$ or $F_{ess} = (2,1,2)$ and $Trk\, A=2$. {\bf STOP}
	  
	 - If $\bL = \bM=\P^1$, i.e. all the minors of the two matrices are identically 0, then $Trk\, A= 1$  and $F_{ess}(A)=(1,1,1)$. {\bf STOP}
	  
  \end{algorithmic}
\end{boxedminipage}
\end{algorithm}

\subsection{Tensors of format $(2, 2, r)$, $r\geq 4$}

In this section we use the relation between tensor rank and secant varieties $\sigma_k(X)$ of the Segre variety $X$; for definitions and more details see e.g. \cite{BCCGO}.

For $(2,2,r)$ tensors with $r\geq 4$ the hyperdeterminant is not defined. With regard to tensor rank, in these cases we can apply the following result (see \cite{CGG1},\cite{CGG2}):

\begin{thm}\label{unbalanced} Let $X$ be the Segre embedding of $\P^{n_1} \times \ldots \times \P^{n_t} \times \P^n \rightarrow \P^m$, where \linebreak $m = (n+1)\prod_{i=1}^t(n_i+1)-1$. Let $h =  \prod_{i=1}^t(n_i+1)-1$ and assume $n > h-\sum_{i=1}^tn_i+1$. Then, the $s$-secant variety $\sigma_s(X)$ is defective for
$$ h-\sum_{i=1}^tn_i+1 < s \leq \min\{n,h\},$$
with defect equal to $\delta_s(X) = s^2-s(h- \sum_{i=1}^tn_i+1)$.
\end{thm}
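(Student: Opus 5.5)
We would prove the statement by identifying $\sigma_s(X)$ with a determinantal variety of matrices, and then comparing its dimension with the expected one.

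\textbf{Setup.} Write $d=\sum_{i=1}^t n_i$ and let $Y\subset\P^h$ be the Segre embedding of $\P^{n_1}\times\dots\times\P^{n_t}$. Then $\dim Y=d$, $Y$ is irreducible and non-degenerate, and $X$ is the Segre embedding of $Y\times\P^n$ inside $\P(\C^{h+1}\otimes\C^{n+1})=\P^m$. We view a point of $\P^m$ as an $(h+1)\times(n+1)$ matrix, up to scalars. A point of $X$ is then $y\otimes v$ with $y\in\widehat Y$ (the affine cone over $Y$) and $v\in\C^{n+1}$. Consequently every element of $\sigma_s(X)$ is a matrix of rank $\le s$. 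Let $D_s\subset\P^m$ be the variety of such matrices. It is irreducible of dimension $s(h+1+n+1-s)-1$. So $\sigma_s(X)\subseteq D_s$.

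\textbf{Step 1: the reverse inclusion.} We show that a general matrix $B\in D_s$ lies in $\sigma_s(X)$. By the hypothesis $s\le\min\{n,h\}$, a general $B\in D_s$ has rank exactly $s$, and its column space is a general $s$-dimensional subspace $W\subset\C^{h+1}$, i.e.\ a general $(s-1)$-plane $\Pi\subset\P^h$. Suppose we find $s$ points $y_1,\dots,y_s\in Y\cap\Pi$ that span $\Pi$. Then $B=\sum_{\ell=1}^s y_\ell\otimes v_\ell$ for suitable $v_\ell\in\C^{n+1}$: write $B$ as $\sum_\ell y_\ell\otimes w_\ell$, using the basis $y_\ell$ of $W$ for the columns. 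This gives $B\in\sigma_s(X)$, hence $\sigma_s(X)=D_s$. The hypothesis $s>h-d+1$ gives
\[
\dim(Y\cap\Pi)=d+(s-1)-h\ge 1 .
\]
So it suffices to know that the linear section $Y\cap\Pi$ spans $\Pi$.

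\textbf{Step 2: the main obstacle.} The key step is to show that $Y\cap\Pi$ spans $\Pi$ for a general $(s-1)$-plane $\Pi$. We would use Bertini's theorem: a general linear section of positive dimension of the irreducible non-degenerate variety $Y$ is irreducible and non-degenerate in $\Pi$. For the non-degeneracy, cut with general hyperplanes one at a time. A general hyperplane section of an irreducible non-degenerate variety of positive dimension is non-degenerate in that hyperplane, because the restriction map on linear forms is injective in degree one. Iterating these cuts, with the dimension staying $\ge1$ at every stage, gives the claim. Non-degeneracy and irreducibility then let us pick $s$ general points of $Y\cap\Pi$ spanning $\Pi$.

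\textbf{Step 3: the defect.} Since $\sigma_s(X)=D_s$, we have
\[
\dim\sigma_s(X)=s(h+n+2-s)-1 .
\]
The expected dimension is $\min\{s(d+n+1)-1,\,m\}$. One checks that $s(d+n+1)-1\le m$ under the hypotheses $n>h-d+1$ and $s\le\min\{n,h\}$, so the expected dimension is $s(d+n+1)-1$. The difference is
\[
s(d+n+1)-s(h+n+2-s)=s^2-s(h-d+1).
\]
This is positive exactly when $s>h-d+1$, which proves defectivity with $\delta_s(X)=s^2-s(h-\sum_{i=1}^t n_i+1)$.
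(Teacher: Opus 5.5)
The paper does not prove this statement; it quotes it from \cite{CGG1}, \cite{CGG2}, so there is no internal proof to compare with. Your Steps 1 and 2 are correct. Identifying $\sigma_s(X)$ with the rank-$\le s$ locus $D_s$ works: a general $(s-1)$-plane $\Pi\subset\P^h$ meets $Y$ in an integral section of dimension $d+s-1-h\ge 1$ that spans $\Pi$, and this gives $\sigma_s(X)=D_s$.

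One small point in Step 2. Injectivity of restricting linear forms to $Z$ is not enough on its own: two skew lines in $\P^3$ have that property, yet a general plane section of them does not span the plane. You also need $H^0(\mathcal O_Z)=\C$. Then a linear form vanishing on the scheme $Z\cap H$ (reduced for general $H$, by Bertini) lifts to $\ell$ with $\ell|_Z=c\,h|_Z$ for some $c\in\C$. So $\ell-ch$ vanishes on $Z$, hence $\ell=ch$, and $\ell$ vanishes on $H$.

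The real error is in Step 3. The claim that $s(d+n+1)-1\le m$ follows from $n>h-d+1$ and $s\le\min\{n,h\}$ is false. Take $t=2$, $n_1=n_2=1$, $n=3$, which is exactly the $(2,2,4)$ format the paper applies the theorem to.
\begin{itemize}
\item Here $h=3$, $d=2$, and $s=3$ is admissible, but $s(d+n+1)-1=17>15=m$. Similarly $20>19$ for $n=4$.
\item So the expected dimension is $m=15$, while $\dim\sigma_3(X)=\dim D_3=14$ (the determinant hypersurface).
\item Hence $\min\{s(\dim X+1)-1,m\}-\dim\sigma_s(X)=1\neq 3$.
\end{itemize}
In these cases your derivation of defectivity rests on a false premise, and the number you compute is not the gap to the expected dimension.

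The repair is short. From $\dim D_s=s(h+n+2-s)-1$ you get two facts:
\begin{itemize}
\item $m-\dim D_s=(h+1-s)(n+1-s)\ge 1$, since $s\le\min\{n,h\}$;
\item $s(d+n+1)-1-\dim D_s=s^2-s(h-d+1)>0$, since $s>h-d+1$.
\end{itemize}
Together these give $\dim\sigma_s(X)<\min\{s(\dim X+1)-1,m\}$ in every case, so $\sigma_s(X)$ is defective. The formula $\delta_s(X)=s^2-s(h-d+1)$ must then be read as the defect relative to the virtual dimension $s(\dim X+1)-1$. That is how the paper uses it when it says $\delta_3(V_{1,1,r-1})=3$ and that $\sigma_3$ ``should'' fill $\P^m$ for $r=4,5,6$. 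Relative to the truncated expected dimension, the formula holds only when $s(d+n+1)\le (h+1)(n+1)$.
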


In our case, for format $(2,2,r)$, we have that $m=4r-1$ and $h=3$, so we get that $\sigma_3(V_{1,1,r-1})$ is defective, with $\delta_3(V_{1,1,r-1})=3$. This implies that for $r=4,5,6$, we should have that $\sigma_3(V_{1,1,r-1})$ fills up the ambient space $\P^m$, but it doesn't, for its defectivity. Recall that the generic point of $\sigma_s (V_{1,1,r-1})$ represents the tensor of rank $s$. Since $\sigma_4 (V_{1,1,r-1})= \P^m$,   the generic rank for a tensor of  format $(2,2,4)$, $(2,2,5)$ and $(2,2,6)$ is 4, as it happens for $r\geq 7$ when 4 is the expected generic rank. 

We now examine the hypermatrices of format $(2,2,r)$, $r\geq 4$, more in detail, and this will allow us to prove Proposition \ref{2,2,4} below and to give an algorithm for tensor rank, degeneracy and essential format.

\begin{prop}\label{2,2,4} Let $A$ be a generic hypermatrix of format $(2,2,r)$, with $r\geq 4$; then $F_{ess}(A) = (2,2,4)$, hence  $A$ is non-concise if and only if $r\geq 5$ .   Moreover, $Trk\, A=4$ and this is also the maximum rank for hypermatrices of this format. 
\end{prop}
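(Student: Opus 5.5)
Three things have to be shown: the essential format of a generic $A$ is $(2,2,4)$; the generic rank is $4$; and no tensor of this format has rank larger than $4$. All three rest on Proposition \ref{coincise} and on the $z$-flattening $D$ of Remark \ref{concisealg}.

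\emph{Essential format.} The $z$-slices of $A$ are $r$ matrices in the $4$-dimensional space of $2\times 2$ matrices. For generic $A$ they span the whole space, so the $z$-rank is $\min\{r,4\}=4$. The $x$-slices and $y$-slices are two generic $2\times r$ matrices, which are linearly independent, so the $x$-rank and $y$-rank are $2$. By Proposition \ref{coincise} we get $F_{ess}(A)=(2,2,4)$. Hence $A$ is concise exactly when $r=4$, and non-concise for $r\geq 5$.

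\emph{Maximum rank is at most $4$.} For any $A$, let $s\leq 4$ be its $z$-rank and choose a basis $E_1,\dots,E_s$ of the span of the $z$-slices. After a change of $z$-coordinates, Proposition \ref{coincise} lets us write $T_A=\sum_{k=1}^{s} E_k\otimes \zeta'_k$, where we regard $E_k\in\C^2\otimes\C^2$. Now extend to a basis of all of $\C^2\otimes\C^2$ consisting of rank-one matrices $u_h\otimes v_h$, $h=1,\dots,4$; for instance take $\e_i\otimes\eta_j$. Expanding each $E_k$ in this basis gives
$$T_A=\sum_{h=1}^4 u_h\otimes v_h\otimes w_h, \qquad w_h=\sum_k c_{hk}\zeta'_k .$$
So $Trk\,A\leq 4$ for every $A$ of format $(2,2,r)$.

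\emph{Generic rank is exactly $4$.} The rank is lower semicontinuous in the sense that $\{Trk\leq 3\}\subseteq\sigma_3(V_{1,1,r-1})$, so it suffices to show that $\sigma_3$ is a proper subvariety of $\P^{4r-1}$. A tensor of rank $\leq 3$ has $z$-rank $\leq 3$, because its $z$-slices all lie in the span of the three matrices $u_h\otimes v_h$. Having $z$-rank $\leq 3$ means the $4\times r$ flattening $D$ has rank $\leq 3$, which is a closed condition. Since $r\geq 4$, a generic $A$ violates it, because its $D$ has rank $4$. Hence $\sigma_3(V_{1,1,r-1})\neq\P^{4r-1}$, and the generic tensor has $Trk\,A=4$. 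This agrees with the defectivity count coming from Theorem \ref{unbalanced}.

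The only delicate step is making sure the closure $\sigma_3$ still lies inside the locus of $z$-rank $\leq 3$; this holds because that locus is closed (it is given by the $4\times4$ minors of $D$).
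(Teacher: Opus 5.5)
Your proof is correct, and it takes a more elementary route than the paper's. For the lower bound you and the paper use the same observation. A decomposition with three terms puts all $z$-slices in the span of three rank-one matrices, so the $z$-rank, i.e.\ the rank of the flattening $D$, is at most $3$. A generic $A$ with $r\geq 4$ has $z$-rank $4$. The paper phrases this as ``at most three of the $z$-variables appear, so $A$ would not be concise''.

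The two arguments differ in the upper bound and in how $\sigma_3$ is handled. The paper gets $Trk\,A\leq 4$ in three stages:
\begin{itemize}
\item it reduces a concise $(2,2,4)$ hypermatrix to an explicit normal form, built from the $(2,2,3)$ canonical forms V and VI, which in turn come from the $(2,2,2)$ forms of \cite{E};
\item it then treats the non-concise cases and the cases $r\geq 5$ separately, reducing each to a smaller essential format;
\item it invokes the defectivity result, Theorem \ref{unbalanced}, to explain why $\sigma_3(V_{1,1,r-1})$ fails to fill $\P^{4r-1}$ for $r=4,5,6$.
\end{itemize}
You instead note that $T_A=\sum_{i,j}\e_i\otimes\eta_j\otimes\bigl(\sum_k a_{ijk}\zeta_k\bigr)$ is already a sum of four decomposable tensors. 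Your detour through a basis $E_1,\dots,E_s$ of the slice span is not even needed. Your closed-condition argument on the $4\times 4$ minors of $D$ then shows directly that $\sigma_3$ is a proper subvariety, so you recover the non-filling conclusion without citing Theorem \ref{unbalanced}.

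Your route is shorter and self-contained, and it generalizes verbatim: maximum rank at most $pq$ for format $(p,q,r)$, with equality generically once $r\geq pq$. The paper's route costs more but yields the explicit normal form of a concise $(2,2,4)$ tensor and hence its associated schemes: $\bL=\emptyset$, and $\bN$ a smooth quadric or a cone over one. That is what the paper actually needs for its non-degeneracy statements and for Algorithm \ref{Alg2x2xr}, though not for the proposition as stated.
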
 

Recall that (see Definition \ref{determ}) the expected codimension of the associated schemes $\bL$ and $\bM$ is 2, i.e. they are expected to be empty, while the expected codimension for $\bN$ is 1. Moreover, theorem \ref{Theorem} and \ref{2.23} tell us that $A$ is degenerate iff $\bL \neq \emptyset$, iff $\bM  \neq \emptyset$, and that if $\bN$ has a degenerate non-bidegenerate point then $A$ is degenerate. We shall also need the following

\begin{remark}\label{AA'}  Let $A$ be a hypermatrix $(2,2,r)$, let $A'$ be the sub-hypermatrix formed by the first $s$ slices in the $z$-direction, and assume that the other $z$-slices are 0-slices. Denoting by $\bL', \bM', \bN'$ the schemes associated to $A'$, we have that $\bL=\bL'$ and $\bM=\bM'$, while $\bN$ is a cone over $\bN' \subset \P^{s-1}$ with vertex $z_{s+1}=\dots =z_r=0$ in $\P^{r-1}$.
\end{remark}

\b $i)$ Let $A$ be concise of format $(2,2,4)$; then, it can be reduced to the form described in the following figure by linear operations on the slices:

\begin{figure}[H]
		%\centering
		\includegraphics[scale=0.45]{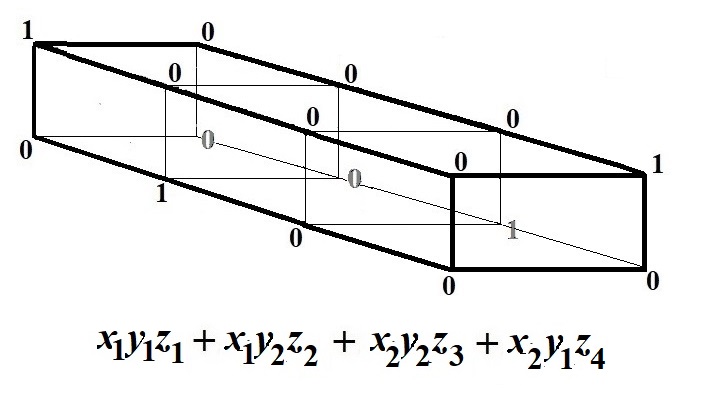}
			\end{figure}
			
Actually, if $A$ is concise, the sub-hypermatrix formed by its first 3 slides in the $z$-direction must be concise, hence can be reduced as in cases V or VI for format (2,2,3). If we are in case Va we can use the first three $z$-slices to eliminate the entries on the fourth $z$-slice  corresponding to the entries equal to 1, so to get the form above. In case Vb, we have an analogous situation. In case VI we can use the first three $z$-slice to eliminate three of the entries on the fourth, ending up with a form (with $a\neq 0$, $A$ being concise):

\begin{figure}[H]
		%\centering
		\includegraphics[scale=0.45]{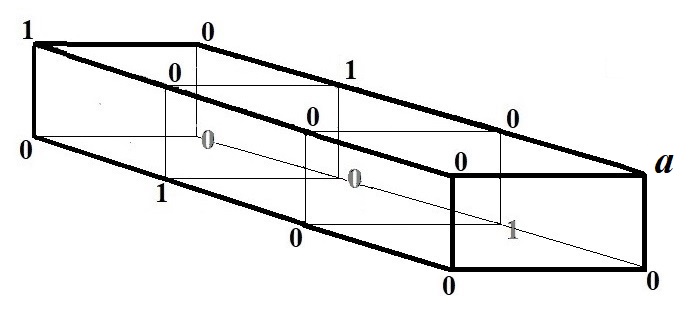}
			\end{figure}	
			
Now we can use the fourth $z$-slice to eliminate one entry on the second, to end up as desired.

Notice that $Trk A = 4$, in fact it is $\leq 4$ from above, and if it were to be $\leq 3$ then $P_A$ could be written as a sum of three monomials where at most three among the variables $z_1,z_2,z_3,z_4$ appear, hence $A$ would not be concise. 
\b The associated scheme $\bL$ is the determinantal scheme given by  $L= \left( \begin{array} {cccc} x_1&0&0&x_2\cr 0&x_1&x_2&0 \cr \end{array}\right)$, that is, $\bL $ is empty since $I_{\bL}=(x_1^2,x_1x_2,x_2^2)$ in $\P^1$, so we conclude by \ref{2.23} that $A$ is non-degenerate.
\b The associated scheme $\bN$ is the determinantal scheme given by  $N= \left( \begin{array} {cc} z_1&z_4\cr z_2&z_3 \cr \end{array}\right)$, that is, $\bN $ is a smooth quadric  in $\P^3$.

\a  $ii)$ Let $A$ be a hypermatrix of format (2,2,r) with $r\geq 5$; then it is non-concise by \ref{concisealg}. If it has maximal essential format (2,2,4), it is clear that it can be reduced to a form where the first four $z$-slices are as in case $i)$, and the following ones are all 0-slices.
 \b Since the ideal of the scheme $\bL$ is the same as in case $i)$, such a hypermatrix is non-degenerate.
\b The scheme $\bN$ is again given by  $N= \left( \begin{array} {cc} z_1&z_4\cr z_2&z_3 \cr \end{array}\right)$, that is, $\bN $ is a cone in $\P^{r-1}$ over a smooth quadric of $\P^3$,  and its vertex is a linear space of dimension $r-5$ in $\P^{r-1}$; notice that all the singular points of $\bN$ are bi-degenerate points, so the singularity of $\bN$ does not imply the degeneracy.

\a $iii)$ Let $A$ be non-concise of format $(2,2,4)$; then its $x$-slices, or $y$-slices, are a matrix $2 \times r$ and a 0-slice, in which case its tensor rank is $\leq 2$, or it can be written as a hypermatrix of format (2,2,3) with a  $z-$slice of zeroes, hence its tensor rank is $\leq 3$.		
\b In the second case Algorithm 2 gives the analysis for essential format and degeneracy, keeping in mind \ref{AA'}. 

\a $iv)$ Let $A$ be non-concise of format $(2,2,r)$, $r\geq 5$; then it reduces to a matrix $2 \times r$, in which case its tensor rank is $\leq 2$, or it can be written as a hypermatrix of format $(2,2,4)$ with some  $z$-slice of zeroes, hence its tensor rank is $\leq 4$.	
\b In the second case the properties of $A$ are deduced by the previous analysis for the $(2,2,4)$ format.

\begin{algorithm}[H]\caption{Tensor rank, degeneracy and essential format for $(2,2,r)$ tensors, $r\geq 4$} \label{Alg2x2xr}
\a  \textbf{Input}: $r$ a natural number $\geq 4$, $A = (a_{ijk})$, $i,j \in \{1,2\}$, $k\in \{1,...,r\}$, $a_{ijk} \in \C$.\\
 \a \textbf{Output}: State if $A$ is degenerate, determine $F_{ess}(A)$, and compute $Trk\, A$.

\a \begin{boxedminipage}{135mm}
    \begin{algorithmic}[1]

	\STATE\label{Alg3Step1} {\it STEP 1)} \  Compute $\det N$:
	
	- If $r=4$ and $\bN$ is a smooth quadric, then $A$ is non-degenerate, concise and $Trk\, A=4$. {\bf STOP}
	
	- If  $r\geq 5$ and $\bN$ is a cone on a smooth quadric in $\P^3$, then $A$ is non-degenerate, $Trk\, A=4$ and $A$ is not concise, with $F_{ess}=(2,2,4)$ {\bf STOP}

- If $\bN$ is not as in the previous cases, $A$ is not concise; GO TO NEXT STEP.
	
	   \STATE\label{Alg3Step2}{\it STEP 2)} \  If $\bN$ is a cone on a smooth conic in $\P^2$  (i.e. the vertex of the cone is a linear space of dimension $r-4$ in $\P^{r-1}$), then $A$ is  non-degenerate, $F_{ess}(A)=(2,2,3)$ and $Trk\, A=3$  {\bf STOP}

Otherwise $A$ is degenerate; GO TO NEXT STEP.

\STATE\label{Alg3Step3}{\it STEP 3)}  If $\bN$ is a cone on a conic in $\P^2$ made by two distinct lines (i.e. $\bN$ is made of two distinct hyperplanes) compute the minors of $L$; then:	 
	   
	   - If  $\bL$ is a simple point in $\P^1$, then $F_{ess}(A)=(2,2,3)$  and $Trk\, A=3$. {\bf STOP}

         - If  $\bL$ is made by two simple points in $\P^1$, then $F_{ess}(A)=(2,2,2)$  and $Trk\, A=2$. {\bf STOP}

 - If $\bN$ is not as above GO TO NEXT STEP.
		
			\STATE\label{Alg3Step3}{\it STEP 4)} \ If $\bN$ is a double hyperplane, compute the minors of $L$; then:
	
	- If the minors of $L$ define a double point in $\P^1$, then $F_{ess}(A) = (2,2,2)$ and $Trk\, A=3$. {\bf STOP}
		
		 - If the minors of $L$ vanish identically, i.e. $\bL=\P^1$, then  $F_{ess} = (2,2,1)$ and $Trk\, A=2$. {\bf STOP}
	  
	 - If $\det N$ vanishes identically, GO TO NEXT STEP.
	
	 	 		\STATE\label{Alg3Step4}{\it STEP 5)} \ When $\det N$ vanishes identically, i.e. $\bN=\P^{r-1}$, compute the minors of $M$ and of $L$; then:	 
	   
	   - If one between $\bL$ and $\bM$ is a double point and the other is all of $\P^1$, then $F_{ess} = (1,2,2)$ or $F_{ess} = (2,1,2)$ and $Trk\, A=2$. {\bf STOP}
	  
	 - If $\bL=\bM=\P^1$, i.e. all the minors of the two matrices are identically 0, then $Trk\, A= 1$ and $F_{ess}(A)=(1,1,1)$. {\bf STOP}
	  
  \end{algorithmic}
\end{boxedminipage}
\end{algorithm}

\bigskip
\bigskip
The first author is a member of INdAM-GNSAGA.


\begin{thebibliography}{biblio}
\bibitem [A1]{A1} S.Abrescia, {\it Tensori tridimensionali e varietà determinantali} Ph.D. Thesis, Univ. of Bologna. (2003)
\bibitem [A2]{A2}  S.Abrescia, {\it A characterization of degenerate tridimensional
tensors}. Journal of Pure and Applied Algebra 187 (2004) 1 – 17
\bibitem [ACGH]{ACGH} M.Arbarello, P.Cornalba,  A.Griffiths, J.Harris, {\it Geometry of Algebraic Curves}, 1985, Springer New York, NY
\bibitem[BCCGO]{BCCGO} A.Bernardi, E. Carlini, M.V.Catalisano, A. Gimigliano, A. Oneto  {\it The hitchhiker guide to secant varieties and tensor decomposition}. Mathematics 6 (2018,  special issue: "Decomposibility of Tensors", L.Chiantini Ed.);  DOI: 10.3390/math6120314. Also available at    http://www.mdpi.com/2227-7390/6/12/314/pdf
\bibitem [BV] {BV} W. Bruns, U. Vetter,  {\it Determinantal Rings}, Lecture Notes in Mathematics,  1327 , Springer, Berlin, Heidelberg  1988.
\bibitem [Ca] {Ca} E. Carlini. {\it Reducing the number of variables of a polynomial.} In Algebraic geometry and geometric modeling, 237–247. Springer, 2006.
\bibitem [CGG1] {CGG1} M.V.Catalisano, A.V. Geramita, A. Gimigliano, {\it Tensor rank, secant varieties to Segre varieties and fat points in multiprojective spaces} Queen's Papers in pure and Applied Math.  XIII (2001), 225-246.
\bibitem [CGG2] {CGG2} M.V.Catalisano, A.V. Geramita, A. Gimigliano, {\it Rank of tensors, secant varieties to Segre varieties and fat points} Linear Algebra and its Appl. 355 (2002), 263-285.
\bibitem [CoCoA] {CoCoA} J. Abbott, A. M. Bigatti, L. Robbiano, {\it CoCoA: a system for doing Computations in Commutative Algebra}.
Available at https://sites.google.com/view/cocoa-c
\bibitem [CS] {CS} G.Comas, M. Seiguer, {\it On the Rank of a Binary Form}. Found. Comput. Math 11, (2011) 65–78. https://doi.org/10.1007/s10208-010-9077-x
\bibitem [E] {E} R. Ehrenborg {\it Canonical Forms of Two by Two by Two Matrices} J. of Algebra, 213 (1999), 195-224.
\bibitem  [F] {F}  S. Friedland, {\it On the generic and typical ranks of 3-tensors}.
Linear Algebra Appl. 436 (2012) 478–497.
\bibitem[GKZ] {GKZ} I.M. Gelfand, M.M. Kapranov, A. Zelevinsky, {\it Discriminants, Resultants and Multidimensional Determinants}, Birchauser, Basel, 1994.
\bibitem[Gh]  {Gh}  F. Gherardelli, {\it Osservazioni sugli iperdeterminanti}, Istituto Lombardo (Rend. Sc.) A 127, (1993) 107-113.
\bibitem [K1] {K1} J.B. Kruskal, {\it Three-way arrays: rank and uniqueness of trilinear decompositions, with applications to arithmetic complexity
and statistics}, Linear Algebra Appl. 18 (1977) 95–138.
\bibitem [K2] {K2} J.B. Kruskal, {\it Rank, decomposition, and uniqueness for 3-way and N-way arrays}, Multi-way Data Analysis, vol. 7–18, North-Holland, Amsterdam, 1989.
\bibitem [Oed] {Oed} L. Oeding,  {\it The hyperdeterminant of a symmetric tensor}, Advances in Math., 231 (2012), 1308-1326.
\bibitem [Ol] {Ol} R. Oldenburger, {\it On canonical binary trilinear forms}, Bull. of the AMS,  38 (1932), 385-387.
\bibitem [O] {O}  G. Ottaviani, {\it Introduction to the Hyperdeterminant and to the Rank of Multidimensional Matrices}. In: Peeva, I. (eds) {\it Commutative Algebra} (2013). Springer, New York, NY. DOI:10.1007/978-1-4614-5292-8-20.
\bibitem [Sc] {Sc} L. Schläfli, {\it  Uber die Resultante eines Systems mehrerer algebraischer Gleichungen}, Denkschr. der Kaiserl. Akad. Wiss., Math-Naturwiss. Klasse, 4(1852),
reprinted in: {\it Gessamelte Abhandlungen}, vol.2, N0 9, p. 9-112, Birkhauser-Verlag,
Basel, 1953. 
\bibitem [TC] {TC} R.M. Thrall, H. Chanler, {\it Ternary trilinear forms in the field of complex numbers}, Duke Math. J., 4,  (1938), 678-690
\bibitem [T] {T}  R. M. Thrall, {\it On Projective Equivalence of Trilinear Forms}
Annals of Mathematics, 42 (1941), 469-485



\end{thebibliography}
\end{document}